\newcommand\g{{\mathfrak{g}}}
\newcommand\h{{\mathfrak{h}}}
\newcommand\ad{{\rm ad}}
\newtheorem{theorem}{Theorem}[section]
\newtheorem{proposition}[theorem]{Proposition}
\newtheorem{corollary}[theorem]{Corollary}
\newtheorem{lemma}[theorem]{Lemma}
\newtheorem{definition}[theorem]{Definition}
\newtheorem{example}[theorem]{Example}
\newtheorem{remark}[theorem]{Remark}
\renewcommand{\theequation }{\thesection .\arabic{theequation}}
\begin{document}
\title[Cosymplectic and $\alpha$-cosymplectic Lie algebras]{Cosymplectic and $\alpha$-cosymplectic  Lie algebras}
\author{Giovanni Calvaruso and Antonella Perrone}
\date{}

\address{Dipartimento di Matematica e Fisica \lq\lq E. De Giorgi\rq\rq \\
Universit\`a del Salento \\ Prov. Lecce-Arnesano \\
73100,  Lecce \\ Italy.}
\email{giovanni.calvaruso@unisalento.it; perroneantonella@outlook.it}

\subjclass[2000]{53C15, 53C12, 53C25, 22E25}
\keywords{{(Almost) cosymplectic structures, $\alpha$-cosymplectic structures, (almost) coK\"ahler structures, $K$-cosymplectic structures, (almost) $\alpha$-Kenmotsu structures}.}

\date{}

    \renewcommand{\theequation}{\thesection .\arabic{equation}}

\begin{abstract}
We completely characterize {cosymplectic  and $\alpha$-cosymplectic} Lie algebras in terms of corresponding symplectic Lie algebras and suitable derivations on them. Several examples are given and classification results are obtained in dimension five for cosymplectic, $K$-cosymplectic and coK\"ahler Lie algebras.
\end{abstract}

\maketitle

\section{Introduction}


A {\em cosymplectic manifold}, as originally defined by Libermann \cite{L1} on 1958, is a smooth manifold, of dimension $2n + 1$, admitting a closed $1$-form $\eta$ and a closed $2$-form $\omega$, such that 
$\eta \wedge \omega^n$ is a volume form. Different but related definitions of  \lq\lq cosymplectic manifolds\rq\rq, involving an almost contact metric structure,  were given in \cite{B2}
and \cite{Li}, and  we may refer to Remark~\ref{remcos} for an illustration of these distinct definitions and their interplay.

When the cosymplectic structure $(\eta,\omega=\Phi)$ arises from the $1$-form $\eta$ and the fundamental $2$-form $\Phi$ of an almost contact metric manifold $(M,\varphi,\xi,\eta,g)$, we have an 
{\em almost coK\"ahler  manifold}. Among almost coK\"ahler  manifolds,  subclasses of special interest are the ones of {\em $K$-cosymplectic manifolds}, defined by the property that the Reeb vector field $\xi$ is Killing, and {\em coK\"ahler  manifolds}, that is, normal almost coK\"ahler manifolds. 
Cosymplectic manifolds and their above cited subclasses have been recently investigated by several authors. Some examples of these studies may be found in  \cite{BG}, \cite{Da}, \cite{FV},  \cite{Li} and references therein, and we may refer to the recent survey \cite{CNY} for a thorough presentation of cosymplectic manifolds and related topics.

{\em Locally conformally cosymplectic structures} are almost cosymplectic structures, which  lie  in the conformal class of a cosymplectic structure. Several results have been obtained for locally conformally cosymplectic structures arising from an almost contact metric structure, and in particular for the related notion of {\em almost $\alpha$-Kenmotsu structures} (see for example \cite{Dileo} and references therein). At the same time, up to our knowledge, a systematic approach to the metric-less case has not been done yet.  



Following the same scheme of the cosymplectic case, by an {\em $\alpha$-cosymplectic structure} we mean an almost cosymplectic structure $(\eta,\omega)$, formed by a closed $1$-form $\eta$ and a $2$-form $\omega$ satisfying $d\omega=2\alpha \eta \wedge \omega$ for some real constant $\alpha$. They are a natural object to study. In fact, we shall see that left-invariant $\alpha$-cosymplectic structures on Lie groups are all and the ones locally conformally cosymplectic structures  with a closed $1$-form.

The aim of this paper is to describe the general structure of cosymplectic Lie algebras (equivalently, of left-invariant cosymplectic structures on Lie groups) {and of $\alpha$-cosymplectic Lie algebras, and related subclasses}.  We begin in Section~2 introducing  some basic definitions and results  on this topic. In Section~3 we shall discuss the relationships between (almost) cosymplectic, $K$-cosymplectic and (almost) coK\"ahler structures on odd-dimensional Lie algebras, and their generalizations given by  $\alpha$-cosymplectic and (almost)  $\alpha$-coK\"ahler Lie algebras, and corresponding properties on their even-dimensional subalgebras determined by the kernel of the $1$-form $\eta$,  also providing several explicit examples.  Isomorphisms between  such different types of  Lie algebras will be characterized in Section~4. In Section~5 we then apply {these results  to classify some} remarkable classes of cosymplectic Lie algebras in dimension five.

\bigskip\noindent
\section{Preliminaries}
\setcounter{equation}{0}

\subsection{Cosymplectic structures}
We report below some basic information on cosymplectic structures, {referring} to the recent survey \cite{CNY} for more details. 

\begin{definition}\label{sympl} {\em An {\em almost cosymplectic structure} on a smooth manifold $M$ of odd dimension $2n+1$ is a pair
$(\eta, \omega)$, where $\eta$ is a $1$-form and $\omega$ is a $2$-form, such that $\eta \wedge \omega ^n$ is a volume form on $M$. When $d\eta=d\omega=0$, the structure is said to be {\em cosymplectic}.
}\end{definition}

Any almost cosymplectic structure $(\eta,\omega)$ uniquely determines a smooth vector field $\xi$ on $M$, called the {\em Reeb vector field} of the almost cosymplectic manifold $(M, \eta, \omega)$, completely characterized by conditions
$$\eta(\xi)=1, \qquad i_\xi \omega=0 \;\, \text{(that is}, \; \xi \in \ker \omega \text{).}$$

Almost coK\"ahler and coK\"ahler structures are special cases of cosymplectic structures, where tensors $\eta$ and $\omega$ are related to the existence of an almost contact metric structure (see Definition \ref{coK} and Remark \ref{remcos}). An {\em almost contact structure}   $(\varphi,\xi,\eta)$ on a $(2n+1)$-dimensional smooth manifold $M$ is formed by a $(1,1)$-tensor $\varphi$, a global vector field $\xi$ and a $1$-form $\eta$, satisfying
 \begin{equation}\label{almostc}
 \eta (\xi)=1 \quad  {\rm and} \quad \varphi ^2 = -Id +\eta \otimes \xi
\end{equation}
(which necessarily yield $\varphi (\xi)=0$ and $\eta \circ \varphi =0$). A Riemannian metric $g$ on $M$ is said to be {\em compatible} with the almost contact structure $(\varphi,\xi,\eta)$ when
\begin{equation}\label{compg}
g(\varphi X, \varphi Y) =g(X,Y) -\eta(X)\eta(Y).
\end{equation}
In this case, $(\varphi,\xi,\eta,g)$ is said to be an {\em almost contact metric structure}.
 The {\em fundamental $2$-form} $\Phi$ of an almost  contact metric structure $(\varphi,\xi,\eta,g)$ is defined by $\Phi(X,Y)=g(X,\varphi Y)$, for all tangent vector fields $X,Y$. 

It is a remarkable fact that for any almost contact metric structure $(\varphi,\xi,\eta,g)$, one has $\eta \wedge \Phi^n \neq 0$ (which leads to the orientability of any almost contact metric manifold \cite[p. 23]{B}). Thus, in particular {\em any almost contact metric structure $(\varphi,\xi,\eta,g)$ yields an almost cosymplectic structure}, given by $(\eta,\Phi)$.  A sort of converse result also holds, in the sense that for any almost cosymplectic manifold $(M,\eta,\omega)$, one can construct by polarization an almost contact metric structure $(\varphi, \xi,\eta,g)$ on $M$, with the same $\eta$ and $\xi$, whose fundamental $2$-form is given by $\omega$  {(see Theorem 3.3 in \cite{CNY}) }.

An almost contact metric structure $(\varphi,\xi,\eta,g)$ is said to be {\em normal} if the almost complex structure $J$ on $M \times \mathbb R$, defined by
$$J\left(X,f\frac{d}{dt} \right)=\left(\varphi X-f \xi, \eta (X) \frac{d}{dt} \right),$$ 
is integrable. It is well known \cite{B} that the integrability of the above $J$ is equivalent to
\begin{equation}\label{normal}
N_{\varphi}+2d\eta \otimes \xi=0,
\end{equation}
where $N_{\varphi}$ denotes the {\em Nijenhuis tensor} of $\varphi$. Moreover, we note that if an almost contact metric structure $(\varphi,\xi,\eta,g)$  satisfies $d\eta=0$, the normality condition \eqref{normal} reduces to $N_\varphi=0$. We recall the following.

\begin{definition}\label{coK} {\em An {\em almost coK\"ahler manifold} is an almost contact metric manifold $(M, \varphi, \xi, \eta, g)$, such that $(\eta,\Phi)$ is a cosymplectic structure. If in addition the almost contact structure is normal, then $M$ is called a {\em coK\"ahler manifold}.
}
\end{definition}

\begin{remark} \label{remcos}{\em 
Some explanations are required here, since different authors used different names for these structures. CoK\"ahler manifolds were introduced and first studied by Blair \cite{B2} under the name of \lq\lq cosymplectic manifolds\rq\rq.  Since then, several authors used this name for coK\"ahler manifolds (see for example \cite{Da},\cite{FV}), although cosymplectic structures, in the wider but related sense reported in the above  Definition~\ref{sympl}, had been previously introduced by Libermann \cite{L1,L2}. Moreover, since the work \cite{Li}, in several papers \lq\lq  cosymplectic manifolds\rq\rq \ {denote} {\em almost} coK\"ahler manifolds as defined in  Definition~\ref{coK}. 

In this paper we  are following  the notations used in \cite{CNY}, which seem well suited to avoid misunderstanding among all these different but related classes.
}\end{remark}

Just like in the contact metric case, an important intermediate class arises between the ones of almost coK\"ahler and coK\"ahler structures,  by requiring that the Reeb vector field is Killing \cite{BG}.

\begin{definition}\label{Kcos} {\em An almost coK\"ahler manifold $(M,\varphi,\xi,\eta,g)$ is said to be {\em $K$-cosymplec\-tic} if its Reeb vector field $\xi$ is Killing.
}
\end{definition}

\noindent
It is easily seen that any coK\"ahler manifold is $K$-cosymplec\-tic, while the converse does not hold \cite{BG}. Moreover, Proposition 2.2 and Corollary 2.4 in \cite{BG} yield the following characterization.

\begin{proposition}\label{lemmaBG}{\bf \cite{BG}}
For an almost coK\"ahler manifold $(M,\varphi,\xi,\eta,g)$, the following properties are equivalent:
\begin{itemize}
\vspace{4pt}\item[(i)] $(M,\varphi,\xi,\eta,g)$ is $K$-cosymplectic (that is, $\xi$ is Killing);
\vspace{4pt}\item[(ii)] $N^{(3)}=\mathcal{L} _{\xi}\varphi$$=0$, where $\mathcal{L}$ denotes the Lie derivative;
\vspace{4pt}\item[(iii)] $\xi$ is parallel (that is, $\nabla \xi=0$, where $\nabla$ is the Levi-Civita connection of $g$).
\end{itemize}
\end{proposition}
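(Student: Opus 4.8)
The plan is to reduce all three conditions to the single statement that the $(1,1)$-tensor $A$ defined by $AX=\nabla_X\xi$ vanishes. The key structural fact I would use from the outset is that the compatibility condition \eqref{compg}, evaluated at $Y=\xi$, gives $g(X,\xi)=\eta(X)$, i.e. $\eta=g(\xi,\cdot)$. Consequently $(\nabla_X\eta)(Y)=g(\nabla_X\xi,Y)=g(AX,Y)$, so the covariant derivatives of $\eta$ and of $\xi$ carry the same information, and the closedness of $\eta$ (available since $(\eta,\Phi)$ is cosymplectic) translates into a symmetry property of $A$. Alongside $A$ I would keep track of the operator $h=\tfrac12\mathcal{L}_\xi\varphi=\tfrac12 N^{(3)}$.

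I would first settle the equivalence (i)$\Leftrightarrow$(iii), which is where the almost coK\"ahler hypothesis does its essential work. Expressing the exterior derivative through the Levi-Civita connection gives $d\eta(X,Y)=g(\nabla_X\xi,Y)-g(\nabla_Y\xi,X)$; hence $d\eta=0$ forces the bilinear form $(X,Y)\mapsto g(AX,Y)$ to be symmetric. On the other hand $(\mathcal{L}_\xi g)(X,Y)=g(\nabla_X\xi,Y)+g(\nabla_Y\xi,X)$, which by the symmetry just obtained equals $2\,g(AX,Y)$. Therefore $\xi$ is Killing if and only if $g(AX,Y)=0$ for all $X,Y$, that is, if and only if $\nabla\xi=0$. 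This is precisely the point at which the equivalence of \lq\lq Killing\rq\rq\ and \lq\lq parallel\rq\rq, which fails for a general vector field, is rescued by the closedness of $\eta$; the easy implication (iii)$\Rightarrow$(i) is of course automatic.

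It then remains to bring in (ii). The main ingredient here is the fundamental identity of almost coK\"ahler geometry, namely $\nabla_X\xi=-\varphi h X$ (in the standard sign convention), which I would derive from the general almost contact metric expression for $g((\nabla_X\varphi)Y,Z)$ in terms of $N^{(1)},N^{(2)},d\eta,d\Phi$ by imposing $d\eta=0$ and $d\Phi=0$; the same computation records that $h$ is self-adjoint, anticommutes with $\varphi$, and satisfies $h\xi=0$ with $\operatorname{Im}h\subseteq\ker\eta$. Granting this, (ii)$\Leftrightarrow$(iii) is immediate: since $\varphi$ is injective on $\ker\eta$ and $\operatorname{Im}h\subseteq\ker\eta$, the equation $A=-\varphi h=0$ holds if and only if $h=0$, i.e. $N^{(3)}=\mathcal{L}_\xi\varphi=0$. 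Combined with (i)$\Leftrightarrow$(iii) this closes the cycle of equivalences.

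I expect the derivation of $\nabla_X\xi=-\varphi h X$ to be the only real obstacle: it requires the full expansion of the covariant derivative of $\varphi$ and careful bookkeeping of which terms survive once $d\eta$ and $d\Phi$ are set to zero. Everything else — the self-adjointness of $A$, the relation $\eta=g(\xi,\cdot)$, the anticommutation of $h$ with $\varphi$, and the injectivity of $\varphi$ on $\ker\eta$ — is routine linear algebra and standard almost contact metric formalism.
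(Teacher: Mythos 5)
Your proposal is correct. Note, however, that the paper does not prove this statement at all: it is quoted verbatim from \cite{BG} (Proposition 2.2 and Corollary 2.4 there), so there is no internal proof to compare against; your argument should instead be measured against the standard proof in the literature, and it is in fact that proof. The equivalence (i)$\Leftrightarrow$(iii) via the observation that $d\eta=0$ makes $A=\nabla\xi$ self-adjoint while the Killing condition makes it skew-adjoint is exactly the mechanism used in \cite{BG}, and your reduction of (ii)$\Leftrightarrow$(iii) to the identity $\nabla_X\xi=-\varphi hX$, $h=\tfrac12\mathcal{L}_\xi\varphi$, together with $\mathrm{Im}\,h\subseteq\ker\eta$ and the injectivity of $\varphi$ on $\ker\eta$, is the standard route going back to Olszak. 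The one place where you defer work is the derivation of $\nabla_X\xi=-\varphi hX$ and of the auxiliary facts about $h$ (self-adjointness, $h\xi=0$, anticommutation with $\varphi$); but this is precisely the identity that the present paper itself treats as known --- in the proof of Proposition~\ref{propA} it invokes the $\alpha$-generalization $\nabla\xi=-\alpha\varphi^2-\tfrac12\varphi\mathcal{L}_\xi\varphi$, citing \cite{CNY} and \cite{Dileo} --- so relying on it is consistent with the paper's own level of rigor, and your sketch of how to obtain it (from the general almost contact metric formula for $g((\nabla_X\varphi)Y,Z)$ with $d\eta=0$, $d\Phi=0$) is the correct one. A genuinely nice feature of your write-up is that (i)$\Leftrightarrow$(iii) is proved using only $d\eta=0$, with no structure-specific identity, which isolates exactly where closedness of $\eta$ rescues the equivalence of ``Killing'' and ``parallel'' that fails for general vector fields.
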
{}

\subsection{$\alpha$-cosymplectic structures.}


An almost cosymplectic structure $(\eta,\omega)$ on an odd-dimensional manifold $M$ is said to be {\em locally conformally cosymplectic} if  for any point $p \in M$ there exists an open neighborhood $U$ of $p$ and a smooth function $\sigma:U\to \mathbb R$, such that
$$d(e^{-\sigma}\eta|_U)=0 \qquad \text{and} \qquad  d(e^{-2\sigma}\omega|_U)=0.$$
If the above equations hold globally on $M$, then  $(\eta,\omega)$ is said to be {\em globally conformally cosymplectic}.

Following the scheme of the locally conformally symplectic case (see \cite{Lee}, \cite{Vaisman1}), we obtain the following characterization.

\begin{theorem}\label{lccchar}
An almost cosymplectic structure $(\eta,\omega)$ is locally conformally cosymplectic if and only if there exists a globally defined closed $1$-form $\tilde\eta$, such that 
\begin{equation}\label{lcs}
d\eta=\tilde\eta \wedge \eta \qquad \text{and} \qquad d\omega=2\tilde\eta \wedge \omega,
\end{equation}
and $(\eta,\omega)$ is globally conformally cosymplectic if and only if $\tilde\eta$ is exact.

In particular, $(\eta,\omega)$ is locally conformally cosymplectic  with $\eta$ closed if and only if there exists some smooth function $f: M \to \mathbb R$, such that $d\omega=2 f \eta \wedge \omega$.
\end{theorem}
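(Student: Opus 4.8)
The plan is to exploit the Poincar\'e lemma together with the nondegeneracy condition $\eta\wedge\omega^n\neq0$, which supplies exactly the uniqueness needed to glue local conformal factors into a single global closed $1$-form $\tilde\eta$. The whole argument rests on one elementary pointwise observation: since $\eta\wedge\omega^n\neq0$, both $\eta$ and $\eta\wedge\omega$ are nowhere-vanishing, so a $1$-form $\mu$ with $\mu\wedge\eta=0$ and $\mu\wedge\omega=0$ must vanish. Indeed, $\mu\wedge\eta=0$ forces $\mu=g\,\eta$ for a smooth function $g$, and substituting into the second relation gives $g\,\eta\wedge\omega=0$, whence $g=0$. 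I expect this to be the engine of the proof, used repeatedly both for gluing and for uniqueness.

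For the forward implication of the first equivalence I would start from the definition. Local conformal cosymplecticity provides a cover $\{U_i\}$ and functions $\sigma_i$ with $d(e^{-\sigma_i}\eta|_{U_i})=0$ and $d(e^{-2\sigma_i}\omega|_{U_i})=0$, which expand to
\begin{equation*}
d\eta=d\sigma_i\wedge\eta,\qquad d\omega=2\,d\sigma_i\wedge\omega \quad\text{on } U_i.
\end{equation*}
Set $\tilde\eta:=d\sigma_i$ on $U_i$. On an overlap, $\mu=d\sigma_i-d\sigma_j$ satisfies $\mu\wedge\eta=0$ and $\mu\wedge\omega=0$, so $\mu=0$ by the observation above; hence the local forms patch to a globally defined $\tilde\eta$, which is closed because it is locally exact and which satisfies \eqref{lcs}. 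For the converse, closedness of $\tilde\eta$ lets me write $\tilde\eta=d\sigma$ on each patch by the Poincar\'e lemma, and the computations $d(e^{-\sigma}\eta)=e^{-\sigma}(d\eta-\tilde\eta\wedge\eta)=0$ and $d(e^{-2\sigma}\omega)=e^{-2\sigma}(d\omega-2\tilde\eta\wedge\omega)=0$ give back local conformal cosymplecticity.

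The global statement follows by the same bookkeeping: if a global factor $\sigma$ exists, then $d\sigma$ satisfies \eqref{lcs}, so by uniqueness $\tilde\eta=d\sigma$ is exact; conversely, if $\tilde\eta=d\sigma$ globally, the computation just performed holds on all of $M$. For the ``in particular'' assertion I would specialize to $d\eta=0$. One direction is immediate: if the structure is locally conformally cosymplectic with $\eta$ closed, then $\tilde\eta\wedge\eta=d\eta=0$ forces $\tilde\eta=f\,\eta$ for a smooth $f$, and then $d\omega=2\tilde\eta\wedge\omega=2f\,\eta\wedge\omega$. For the reverse direction I would set $\tilde\eta=f\,\eta$, which satisfies $\tilde\eta\wedge\eta=0=d\eta$ and $2\tilde\eta\wedge\omega=d\omega$ automatically, so by the first part it only remains to verify that $\tilde\eta$ is closed.

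The main point, and the step I expect to require the most care, is precisely this closedness. Differentiating $d\omega=2f\,\eta\wedge\omega$ and using $d\eta=0$ together with $\eta\wedge d\omega=0$ yields $df\wedge\eta\wedge\omega=0$. Writing $df=a\,\eta+\beta$ with $\beta$ annihilating the Reeb field $\xi$, the injectivity of $\eta\wedge(\cdot)$ on horizontal forms reduces this to $\beta\wedge\omega=0$, and the nondegeneracy of $\omega$ on $\ker\eta$ (the symplectic Lefschetz property) gives $\beta=0$; hence $d\tilde\eta=df\wedge\eta=0$. This last reduction is exactly where the rank condition built into $\eta\wedge\omega^n\neq0$ is genuinely used, and it is the place where the degree of $\omega$ must be taken into account.
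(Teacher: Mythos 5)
Your proof follows the same overall route as the paper's own argument --- glue the local forms $d\sigma_i$ into a global closed $\tilde\eta$, recover the local conformal factors from the Poincar\'e lemma, and reduce the final claim to $\tilde\eta=f\eta$ --- so it is not a different strategy. However, at the two delicate points you are more careful than the paper, and one of these is a genuine gap in the paper's proof that you fill. First, in the gluing step the paper deduces $\tilde\eta_U=\tilde\eta_V$ from $(\tilde\eta_U-\tilde\eta_V)\wedge\omega=0$ and ``nondegeneracy'' of $\omega$ alone; since the Reeb field lies in the kernel of $\omega$, wedging $1$-forms with $\omega$ is injective only for $n\ge 2$, whereas your combined use of $\mu\wedge\eta=0$ (giving $\mu=g\,\eta$) and $\mu\wedge\omega=0$ (giving $g\,\eta\wedge\omega=0$, so $g=0$) is clean and works in every dimension. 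Second, and more importantly: for the ``if'' direction of the last statement the paper simply sets $\tilde\eta=f\eta$ and invokes the first part of the theorem, but the first part applies only to a \emph{closed} $1$-form, and $d(f\eta)=df\wedge\eta$ is not obviously zero. You correctly single out this closedness as the crux and supply the missing argument from $d^2\omega=0$: one gets $df\wedge\eta\wedge\omega=0$, hence the horizontal part $\beta$ of $df$ satisfies $\beta\wedge\omega=0$, hence $\beta=0$ by Lefschetz injectivity on $\ker\eta$, so $d(f\eta)=df\wedge\eta=0$.

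One caveat you should flag explicitly: the Lefschetz step ``$\beta\wedge\omega=0$ with $\beta$ horizontal implies $\beta=0$'' requires $n\ge 2$; in dimension $3$ every horizontal $1$-form wedges to zero against $\omega$. This is not a defect you can repair, because the ``in particular'' statement is in fact false for $n=1$: on $\mathbb{R}^3$ take $\eta=dz$ and $\omega=e^{xz}\,dx\wedge dy$; then $\eta\wedge\omega\neq 0$, $d\eta=0$ and $d\omega=x\,\eta\wedge\omega$, yet any local conformal factor $\sigma$ would have to satisfy $d\sigma\wedge dz=0$ (so $\sigma=\sigma(z)$) together with $2\sigma'(z)=x$, which is impossible on an open set. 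So your argument proves the final claim exactly in the dimensions $2n+1\ge 5$ where it is true, and in doing so it exposes both a hidden dimensional hypothesis and the gap in the paper's own treatment of that claim; the first two assertions of the theorem, as both you and the paper prove them, are valid in all dimensions.
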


\begin{proof}
If $(\eta,\omega)$ is locally conformally cosymplectic, then on each neighborhood $U$, from $d(e^{-\sigma}\eta|_U)=0$ and $d(e^{-2\sigma}\omega|_U)=0$  we have at once { $d\eta=\tilde\eta{_U} \wedge \eta$ and  $d\omega=2\tilde\eta{_U} \wedge \omega$, where we put $\tilde\eta{_U}=d\sigma$. Denote by $U$ and $V$, $U\cap V\neq0$, two neighborhoods which satisfy the above conditions. Since $(\eta,\omega)$ is almost cosymplectic, $\omega$ is nondegenerate. Therefore, as $(\tilde\eta_U -\tilde\eta_V )\wedge \omega=0$ we conclude that $\tilde\eta_U=\tilde\eta_V$  on $U\cap V$. Hence, we have a global $1$-form $\tilde\eta$ on $M$ satisfying \eqref{lcs}, uniquely determined by $\tilde\eta|_U=\tilde\eta_U$} (which also ensures that $\tilde\eta$ is closed).

Conversely, since $d\tilde\eta=0$, for any open neighborhood $U$ on $M$ where $\tilde\eta$ is exact, that is, $\tilde\eta|_U=d\sigma$, from \eqref{lcs} we have at once that $d(e^{-\sigma}\eta|_U)=0$ and $d(e^{-2\sigma}\omega|_U)=0$.

{With regard to the last statement of Theorem~\ref{lccchar}, the \lq\lq if\rq\rq \ part follows at once from  the first part  by setting $\tilde\eta=f \eta$. For the \lq\lq only if\rq\rq \ part, suppose that $(\eta,\omega)$ is locally conformally cosymplectic and $d\eta=0$. Then, again from the first part of Theorem~\ref{lccchar}, there exists some 
$1$-form $\tilde\eta$, satisfying  \eqref{lcs}. But then, $0=d\eta=\tilde\eta \wedge \eta$ and so, for any tangent vector field $X$ on $M$, we have $\tilde\eta (X)=f \eta(X)$, where we put $f=\tilde{\eta}(\xi)$.
}
\end{proof}

\noindent
We now introduce the following.

\begin{definition}
{\em An almost cosymplectic structure $(\eta,\omega)$ is said to be {\em $\alpha$-cosymplectic} if $\eta$ is closed and there exists some $\alpha \in \mathbb R$, such that $d\omega=2\alpha \eta \wedge \omega$.}
\end{definition}

Clearly, cosymplectic structures are $\alpha$-cosymplectic with $\alpha=0$. On the other hand, setting $\tilde\eta=\alpha \eta$ we see at once that equations \eqref{lcs} hold. Thus,  any $\alpha$-cosymplectic structure is locally conformally cosymplectic. {In the special case of left-invariant structures, by Theorem~\ref{lccchar} and the above Definition we have at once the following result, which  motivates the study of the class of  $\alpha$-cosymplectic structures.

\begin{corollary} A left-invariant almost cosymplectic structure $(\eta,\omega)$ with $d\eta=0$ on a Lie group $G$ is locally conformally cosymplectic if and only if it is $\alpha$-cosymplectic. 
\end{corollary}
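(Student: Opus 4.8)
The plan is to prove the two implications separately, observing that the \lq\lq if\rq\rq\ direction is essentially already recorded in the discussion preceding the statement and requires no left-invariance. Indeed, if $(\eta,\omega)$ is $\alpha$-cosymplectic, then putting $\tilde\eta=\alpha\eta$ one checks immediately, using $d\eta=0$ and $d\omega=2\alpha\,\eta\wedge\omega$, that equations \eqref{lcs} are satisfied; hence by Theorem~\ref{lccchar} the structure is locally conformally cosymplectic.

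For the nontrivial \lq\lq only if\rq\rq\ direction, I would start from the last assertion of Theorem~\ref{lccchar}: since $(\eta,\omega)$ is locally conformally cosymplectic with $\eta$ closed, there is a smooth function $f:G\to\mathbb R$ with $d\omega=2f\,\eta\wedge\omega$, and the globally defined closed $1$-form of \eqref{lcs} is correspondingly $\tilde\eta=f\eta$. The entire problem then reduces to showing that $f$ is constant, so that $\alpha:=f$ realizes the $\alpha$-cosymplectic condition.

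The key step exploits left-invariance. Since $\eta$ and $\omega$ are left-invariant, so are $d\omega$ and the Reeb field $\xi$, the latter being uniquely determined by the left-invariant data through $\eta(\xi)=1$ and $i_\xi\omega=0$. Now, in the proof of Theorem~\ref{lccchar} the $1$-form $\tilde\eta$ was shown to be uniquely determined by the relation $d\omega=2\tilde\eta\wedge\omega$, using the nondegeneracy of $\omega$. Applying an arbitrary left translation $L_g$, one has $L_g^*(d\omega)=d\omega$ and $L_g^*\omega=\omega$, so $L_g^*\tilde\eta$ satisfies the very same defining relation $d\omega=2(L_g^*\tilde\eta)\wedge\omega$; the cited uniqueness then forces $L_g^*\tilde\eta=\tilde\eta$, that is, $\tilde\eta$ is left-invariant. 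Equivalently, and perhaps more transparently, one may pass to the Lie algebra $\mathfrak{g}$ of $G$, where $\eta$ and $\omega$ become elements of $\Lambda^\bullet\mathfrak{g}^*$, the exterior derivative becomes the Chevalley--Eilenberg differential, and the defining algebraic equation for $\tilde\eta$ is solved within $\mathfrak{g}^*$.

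Granting the left-invariance of $\tilde\eta$, the function $f=\tilde\eta(\xi)$ is the pairing of a left-invariant $1$-form with a left-invariant vector field, whence $f(g)=\tilde\eta_g(\xi_g)=\tilde\eta_e(\xi_e)=f(e)$ for every $g\in G$; thus $f$ is the constant $\alpha=\tilde\eta_e(\xi_e)\in\mathbb R$. Substituting back yields $d\omega=2\alpha\,\eta\wedge\omega$, so $(\eta,\omega)$ is $\alpha$-cosymplectic, completing the argument. The only genuine obstacle is the left-invariance of $\tilde\eta$; everything else is formal, and this obstacle is resolved cleanly by the uniqueness statement already contained in Theorem~\ref{lccchar}.
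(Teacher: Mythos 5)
Your proof is correct and takes essentially the same route as the paper, which derives this corollary \lq\lq at once\rq\rq\ from Theorem~\ref{lccchar} and the definition of $\alpha$-cosymplectic structures: your argument simply fills in that immediate step, using the last statement of Theorem~\ref{lccchar} together with left-invariance (via the uniqueness of $\tilde\eta$, which here is applied only to multiples of the left-invariant form $\eta$) to force the conformal factor $f=\tilde\eta(\xi)$ to be constant. The \lq\lq if\rq\rq\ direction is likewise handled exactly as in the paper's discussion preceding the corollary, by setting $\tilde\eta=\alpha\eta$.
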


}

\smallskip
We recall that an {\em almost $\alpha$-Kenmotsu structure} is an almost contact metric structure $(\varphi,\xi,\eta,g)$ such that $d\eta=0$ and $d\Phi= 2 \alpha \eta \wedge \Phi$ {for some non-zero real constant $\alpha$,} and an {\em $\alpha$-Kenmotsu structure} is a normal almost $\alpha$-Kenmotsu structure  {\cite{Dileo},\cite{Olszak}.} So, setting $\tilde \eta=\alpha \eta$, by Theorem \ref{lccchar} we get that $(\eta,\Phi)$ is $\alpha$-cosymplectic. Indeed, almost $\alpha$-Kenmotsu structures are locally conformal to almost coK\"ahler structures (see \cite{Vaisman2},\cite{Olszak}).

We can treat (almost) coK\"ahler and (almost) $\alpha$-Kenmotsu structures in a unified way introducing the following.

\begin{definition}
{\em An {\em almost $\alpha$-coK\"ahler structure} is an almost contact metric structure $(\varphi,\xi,\eta,g)$ such that $(\eta,\Phi)$ is $\alpha$-cosymplectic, and an {\em $\alpha$-coK\"ahler structure} is a normal almost $\alpha$-coK\"ahler structure. 
}\end{definition}


\bigskip\noindent
\section{The structure of cosymplectic and $\alpha$-cosymplectic Lie algebras}  \label{scLa}
\setcounter{equation}{0}

Let $G$ denote an odd-dimensional (simply connected) Lie group and $\mathfrak g$  the corresponding Lie algebra. Suppose that $G$ admits a left-invariant almost cosymplectic  structure $(\eta,\omega)$. Tensors $\eta$ and  $\omega$  are uniquely determined at the Lie algebra level by  a $1$-form and a $2$-form on $\mathfrak g$, respectively. Correspondingly, given a left-invariant almost contact metric structure $(\varphi,\xi,\eta,g)$, one has $\xi \in \mathfrak g$ and $\varphi$ and $g$ are respectively an endomorphism  and an inner product on $\mathfrak g$. These remarks justify the following.

\begin{definition}\label{contstr}
{\em Let $\mathfrak{g}$ denote an arbitrary $(2n+1)$-dimensional Lie algebra.
\begin{itemize}

\item[(o)]
An {\em almost contact metric structure} on $\g$ is a quadruple $(\varphi,\xi,\eta,g)$, where $\eta \in \mathfrak{g} ^*$, 
$\varphi \in {\rm End}(\mathfrak{g})$, $\xi \in \mathfrak{g}$ and $g$ is a definite positive inner product on $\g$, such that
$$\begin{array}{ll}
\eta(\xi)=1, & \quad \varphi ^2 = -I +\eta \otimes \xi, \\[4pt]
\eta\circ \varphi=0, & \quad g(\varphi X,\varphi Y)= g(X,Y)-\eta(X)\eta(Y) . 
\end{array}
$$
\item[(i)] An {\em almost cosymplectic structure} on $\g$ is a pair $(\eta,\omega)$, where $\eta \in \mathfrak{g} ^*$, $\omega \in \Lambda^2(\g)$ and $\eta \wedge {\omega ^n} \neq 0$.

%

\item[(ii)] An {\em $\alpha$-cosymplectic structure} on $\g$ is an almost cosymplectic structure $(\eta,\omega)$, such that $d\eta=0$ and 
$d\omega=2 \alpha \eta \wedge \omega$, for some real constant $\alpha$.

\item[(iii)] An {\em almost $\alpha$-coK\"ahler structure} on $\g$ is an almost contact metric structure \linebreak $(\varphi,\xi,\eta,g)$, such that $(\eta,\Phi)$ is $\alpha$-cosymplectic, where $\Phi$ denotes its  fundamental $2$-form.
%
%
\item[(iv)] {An} {\em $\alpha$-coK\"ahler structure} on $\g$ is a normal almost $\alpha$-coK\"ahler structure.
\end{itemize}
%

\noindent
An {\em almost cosymplectic} (respectively, {\em $\alpha$-cosymplectic}, {\em (almost) $\alpha$-coK\"ahler})  {\em Lie algebra} is a Lie algebra $\g$ admitting an {\em almost cosymplectic} (respectively, $\alpha$-cosymplectic, (almost) $\alpha$-coK\"ahler) structure.

}\end{definition}

%
%
%
%
%
%

Clearly,  the above  definitions give  cosymplectic and (almost) coK\"ahler Lie algebras setting $\alpha=0$, and (almost) $\alpha$-Kenmotsu structures for $\alpha\neq 0$.

Correspondingly, for an even-dimensional Lie algebra $\h$ we  have the following.

\begin{definition}\label{symplstr}
{\em Let $\mathfrak{h}$ denote an arbitrary $(2n)$-dimensional Lie algebra.

\begin{itemize}
\item[(i)] An {\em almost symplectic structure} on $\h$ is a $2$-form $\Omega$ on $\h$ of maximal rank, that is, such that $\Omega^n \neq 0$. 

\item[(ii)] A {\em symplectic structure} on $\h$ is a closed $2$-form $\Omega$ on $\h$ of maximal rank.
\item[(iii)] An {\em almost K\"ahler structure} on $\h$ is a pair $(J,h)$, where $J$ is an almost complex structure and 
$h$ is a positive definite inner product on $\h$, such that its fundamental $2$-form $\Omega$, defined by
$$
\Omega (X,Y)=h(X,JY),
$$
is  a symplectic structure. 
\item[(iv)] A {\em K\"ahler structure} on $\h$ is an almost K\"ahler structure $(J,h)$, satisfying the additional integrability condition $N_{J}=0$.
\end{itemize}
An {\em almost symplectic} (respectively, {\em symplectic, almost K\"ahler, K\"ahler}) {\em Lie algebra} is a Lie algebra $\h$ admitting an almost symplectic  (respectively, symplectic, almost K\"ahler, K\"ahler) structure. 
}\end{definition}

The results of this section will show that the correspondence between the above structures on odd- and even-dimensional Lie algebras is not just formal but substantial.  We start with the following.

\begin{theorem}  \label{teoalmostcosy}
There exists a one-to-one correspondence between $(2n+1)$-dimensional almost cosymplectic Lie algebras $(\g, \eta, \omega)$ with $d\eta=0$, and $(2n)$-dimensional almost symplectic Lie algebras $(\h, \Omega)$, together with any derivation $D\in Der(\h)$. 
\end{theorem}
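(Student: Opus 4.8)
The plan is to realize the correspondence concretely through the canonical splitting of $\g$ determined by the Reeb vector field, and then to recognize the resulting odd-dimensional Lie algebra as a semidirect product of its even-dimensional part by a one-dimensional factor. The whole argument amounts to exhibiting two constructions and checking that they are mutually inverse.

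First, starting from an almost cosymplectic Lie algebra $(\g,\eta,\omega)$ with $d\eta=0$, I would let $\xi$ be the Reeb vector field. It is uniquely determined by $\eta(\xi)=1$ and $i_\xi\omega=0$: indeed $\omega^n\neq 0$ forces $\omega$ to have rank exactly $2n$ on the $(2n+1)$-dimensional space $\g$, so $\ker\omega=\mathbb{R}\xi$ is one-dimensional, and $\eta\wedge\omega^n\neq 0$ guarantees $\eta$ is nonzero on it, which normalizes $\xi$. I then set $\h=\ker\eta$. The key observation is that $d\eta=0$ is equivalent to $\eta([X,Y])=0$ for all $X,Y\in\g$, i.e.\ $[\g,\g]\subseteq\h$; in particular $\h$ is an ideal of $\g$ and $\g=\h\oplus\mathbb{R}\xi$ as vector spaces. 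I would then define $\Omega:=\omega|_{\h\times\h}$ and $D:=\ad_\xi|_{\h}=[\xi,\,\cdot\,]|_{\h}$. Because $\h$ is an ideal, $D$ maps $\h$ into $\h$, and the Jacobi identity in $\g$ shows $D\in Der(\h)$. Since $i_\xi\omega=0$ identifies $\omega$ with an element of $\Lambda^2\h^*$ and $\g^*=\h^*\oplus\mathbb{R}\eta$, the condition $\eta\wedge\omega^n\neq 0$ is equivalent to $\Omega^n\neq 0$, so $(\h,\Omega)$ is almost symplectic.

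For the converse, given an almost symplectic Lie algebra $(\h,\Omega)$ and $D\in Der(\h)$, I would form the semidirect product $\g=\h\rtimes_D\mathbb{R}\xi$, that is, the vector space $\h\oplus\mathbb{R}\xi$ with bracket extending that of $\h$ by $[\xi,X]:=D(X)$ for $X\in\h$. Here the derivation property of $D$ is exactly what is required for the Jacobi identity to hold, so $\g$ is a Lie algebra. I would then define $\eta$ by $\eta|_\h=0$, $\eta(\xi)=1$, and $\omega$ by $\omega|_{\h\times\h}=\Omega$, $i_\xi\omega=0$. Since $[\g,\g]\subseteq\h$ by construction, the computation above gives $d\eta=0$, and $\Omega^n\neq 0$ yields $\eta\wedge\omega^n\neq 0$, so $(\g,\eta,\omega)$ is almost cosymplectic with $\eta$ closed.

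Finally I would verify that the two assignments are mutually inverse. Passing from $(\g,\eta,\omega)$ to $(\h,\Omega,D)$ and rebuilding recovers the original bracket, $\eta$, and $\omega$; in particular the Reeb field reconstructed from the semidirect product coincides with the original $\xi$, since both span $\ker\omega$ and pair to $1$ with $\eta$. Conversely, the triple $(\h,\Omega,D)$ is returned unchanged. The genuinely routine parts are the two nondegeneracy translations ($\Omega^n\neq 0 \Leftrightarrow \eta\wedge\omega^n\neq 0$) and the equivalence between the derivation identity for $D$ and the Jacobi identity for the extended bracket. The main point to handle carefully is the role of the hypothesis $d\eta=0$: it is precisely this assumption that makes $\h$ an ideal and thereby forces the bracket of $\g$ to be a semidirect product with no $\xi$-component in $[\xi,X]$, which is what upgrades the construction into a genuine bijection rather than a one-directional correspondence.
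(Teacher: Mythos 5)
Your proposal is correct and follows essentially the same route as the paper's proof: the decomposition $\g=\mathbb{R}\xi\oplus\ker\eta$ with $\Omega=\omega|_{\h\times\h}$ and $D=\ad_\xi|_{\h}$ in one direction, and the semidirect product $\h\rtimes_D\mathbb{R}\xi$ with $\eta$, $\omega$ extended by $\eta|_\h=0$, $i_\xi\omega=0$ in the other. Your explicit check that the two constructions are mutually inverse, and your remark that $d\eta=0$ makes $\ker\eta$ an ideal, are welcome refinements of details the paper leaves implicit, but they do not change the argument.
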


\begin{proof}
We first start from an almost cosymplectic Lie algebra $(\g,\eta, \omega)$ with $d\eta=0$. Thus, $\eta \wedge \omega^n \neq 0$ and there exists a unique vector field $\xi$ completely determined by $\eta(\xi)=1$ and $i_\xi \omega=0$.

We now put $\h:= \ker \eta$ and $\Omega:=\omega|_{\h \times \h}$. We have to show that $\h$ is a Lie algebra, $\Omega$ is an almost symplectic structure on $\h$ and $(\g, \eta, \omega)$ induces a derivation $D \in Der(\h)$.

\noindent
For all $x,y\in\h$, we have 
$${\eta([x,y])}=-d\eta(x,y)=0.$$
Therefore, $[x,y]\in \ker\eta=\h$ and so, $\h$ is a Lie algebra. Consider now a basis $\{\xi,x_1,...,x_{2n}\}$ of $\g$, with $x_1,...,x_{2n} \in \ker \eta$. Then,
$$ 0\neq (\eta\wedge \omega^n)(\xi,x_1,...,x_{2n})=\eta(\xi)\Omega(x_1,x_2)...\Omega(x_{2n-1},x_{2n})=\Omega^n(x_1,...,x_{2n}).
$$
So, $(\h,\Omega)$ is an almost symplectic Lie algebra.
Next, it is well known that $\ad_\xi: \g \to \g$ is a derivation of the Lie algebra $\g$. We observe that for any $x \in \h$, we have
$${\eta([\xi,x])}=-d\eta(\xi,x)=0$$
and so, $D:=\ad _{\xi}|_{\h}$ is a derivation of $\h$. 

\medskip
Conversely, consider a $(2n)$-dimensional almost symplectic Lie algebra $(\h,\Omega)$ and a derivation $D$ of $\h$. Then, we will show that there exists a corresponding $(2n+1)$-dimensional almost cosymplectic Lie algebra $(\g,\eta,\omega)$ with $d\eta=0$. We first define a Lie algebra $\g:= \mathbb R \xi \oplus \h$, putting 
\begin{align} \label{parentesi}
[x,y]:=[x,y]_{\h}, \quad [\xi,x]:=Dx  \qquad {\rm {for\ all\ }} x,y \in \h.	
\end{align}
On $\g$, we then define a $2$-form $\omega$ setting
\begin{align*}
	\omega(\xi, x):=0, \quad \omega(x,y):=\Omega(x,y)  \qquad {\rm {for\ all\ }} x,y \in \h.
\end{align*}
Next, we  introduce $\eta: \g \to \mathbb R$, uniquely determined by conditions $\eta(\xi):=1$ and $\eta|_{\h}:=0$. Taking into account Equation~\eqref{parentesi}, we have
\begin{align*}
{d\eta(x,y)=-\eta ([x,y])=0, \quad \quad d\eta(\xi,x)=-\eta([\xi,x])=0,} \qquad \text{for all} \; x,y \in \h.
\end{align*}
So, $d\eta=0$ on $\g$. Besides, consider a basis $\{\xi,x_1,...,x_{2n}\}$ of $\g$, with $\{x_1,...,x_{2n}\}$ a basis of $\h$. Since $\Omega$ is a volume form on $\h$, we then have 
$$(\eta \wedge \omega^n) (\xi,x_1,\dots,x_{2n})=\Omega (x_1,\dots,x_{2n}) \neq 0,$$
that is, $(\g,\eta,\omega)$ is an almost cosymplectic Lie algebra and this concludes the proof.

\end{proof}

The above Theorem~\ref{teoalmostcosy} yields that the class of almost cosymplectic Lie algebras $(\g,\eta,\omega)$ with a closed $1$-form $\eta$ is indeed large. In fact, one can construct one of such examples starting from any almost symplectic Lie algebra and any derivation on it. 

\begin{example}{\bf Almost cosymplectic structures from $4D$ unsolvable Lie  algebras.}
{\em  It is well known that a four-dimensional symplectic Lie algebra is necessarily solvable \cite{chu}. Therefore, no symplectic structures exist on four-dimensional unsolvable Lie algebras $\h=\mathbb R e_0 \oplus \mathfrak{g}_3$, where $\g _3 =span(e_1,e_2,e_3)$ is either $\mathfrak{sl}(2)$ or  $\mathfrak{su}(2)$, that is,
$$[e_1,e_2]=\lambda_3 e_3, \ [e_2,e_3]=\lambda_1 e_1, \ [e_3,e_1]= \lambda_2 e_2, \ [e_0,e_i]=0, \ \  i=1,2,3,$$
with {$ \lambda_1 \lambda_2 \lambda_3 \neq 0$.} Standard calculations yield that derivations on such a Lie algebra $\h$ take the form
$$
D=\left(
\begin{array}{cccc}
  a &  0   &   0    &  0  \\[4pt]
	0 &  0   &   b    &  c  \\[4pt]
	0 &  -\frac{\lambda_2}{\lambda_1} b   &   0    &  d  \\[4pt]
	0 &  -\frac{\lambda_3}{\lambda_1} c   &  -\frac{\lambda_3}{\lambda_2} d    &  0  \\[4pt]
	\end{array}
\right), \quad a,b,c,d \in \mathbb R.
$$
By Theorem~\ref{teoalmostcosy}, starting from any nondegenerate $2$-form $\Omega$  and any of such derivations $D$ on $\h$, one can construct a corresponding almost cosymplectic structure $(\eta,\omega)$ on $\g=\mathbb R \oplus \h$, with $d\eta=0$. 
 
}
\end{example}

\begin{example}{\bf Almost cosymplectic structures from  $\h_{2n+1}\times \mathbb R$.}
{\em 
Consider  the $(2n+1)$-dimensional Heisenberg Lie algebra  $\h_{2n+1}=$ span$\{e_1,\dots,e_{2n+1}\}$, completely described by 
$$[e_i,e_{i+1}]=e_{2n+1}, \qquad i=1,\dots,n-1.$$
A linear map $\bar D:\h_{2n+1} \to \h_{2n+1}$ is a derivation of $\h_{2n+1}$ if and only if with respect to the basis $\{e_i\}$, it is described by matrix
\begin{align*}
	\bar D=\left( 
	\begin{array} {cc} 
	      			A & * \\[4pt]
							0 & \lambda
			\end{array} \right),  \quad
			&  A=(a_{i,j}) \in \mathfrak{gl_{2n}}(\mathbb R), a_{i+1,i+1}=\lambda-a_{i,i},\ i=2k+1,\ k=0,...,n-1
\end{align*}
(see \cite[Lemma 4.1]{Ovando1}). Then, $\lambda={\rm tr}\bar D/n$. 

Consider the semidirect product extension $\h:=\mathbb R e_0 \ltimes \h_{2n+1}$ defined by $[e_0,e_i]=\bar D(e_i)$ for all indices $i$.  
This extension is unimodular if and only if $\lambda=0$. As proved in \cite[Proposition~4.2]{Ovando1}, such an unimodular extension $\h$ does not admit any symplectic structure. On the other hand, it obviously admits plenty of almost symplectic structures, since it suffices to consider any nondegenerate $2$-form $\Omega$ on $\h=\mathbb R e_0\times \h_{2n+1}$ (for example, $\Omega:=e^{01}+{e^{23}+ \dots} +e^{2n 2n+1}$, where $e^{ij}:=e^i\wedge e^j$). 

As  in the  previous example, by Theorem~\ref{teoalmostcosy}, starting from any nondegenerate $2$-form $\Omega$ and any derivation $D$ on $\h$, one can construct a corresponding almost cosymplectic structure $(\eta,\omega)$ on $\g=\mathbb R \xi \oplus \h$, with $d\eta=0$.
We observe that the class of these structures is very large, because the examples of unimodular semidirect extensions $\h:=\mathbb R e_0 \ltimes \h_{2n+1}$, the nondegenerate $2$-forms $\Omega$ on them, and the derivations on them, all increase quadratically in function of $n$.
}\end{example}

We now turn our attention to $\alpha$-cosymplectic  and symplectic Lie algebras.  
Let $(\h,\Omega)$ denote a symplectic Lie algebra. An element $\theta \in \mathfrak{gl}(\h)$ is called an {\em infinitesimal symplectic transformation} (for short, an {\em i.s.t.}) if 
$$\theta^T \circ \Omega +\Omega \circ \theta=0$$
(see also \cite{BFM}). Observe that the above condition is equivalent to the fact that the bilinear operator 
$$
\begin{array}{rcl}  F_{\Omega, \theta}:\quad \h \times \h &\to& \mathbb R \\[4pt] (x,y) &\mapsto& \Omega(\theta(x),y) 
\end{array}$$
is symmetric. We now prove the following result.

\begin{theorem}  \label{teocosy}
There exists a one-to-one correspondence between $(2n+1)$-dimensional $\alpha$-cosymplectic Lie algebras $(\g,\eta,\omega)$, and $(2n)$-dimensional symplectic Lie algebras $(\h, \Omega)$ together with a derivation $D\in Der(\h)$, such that $D+\alpha I$  is an infinitesimal symplectic  transformation.
\end{theorem}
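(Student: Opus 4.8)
The plan is to build directly on Theorem~\ref{teoalmostcosy}, which already furnishes a one-to-one correspondence between $(2n+1)$-dimensional almost cosymplectic Lie algebras $(\g,\eta,\omega)$ with $d\eta=0$ and pairs $(\h,\Omega)$ consisting of a $(2n)$-dimensional almost symplectic Lie algebra together with a derivation $D\in Der(\h)$. The only thing left to do is to understand which of these pairs correspond precisely to $\alpha$-cosymplectic Lie algebras. By definition, passing from almost cosymplectic with $d\eta=0$ to $\alpha$-cosymplectic amounts to adding exactly the single extra condition $d\omega=2\alpha\,\eta\wedge\omega$. So my whole task reduces to translating this one equation, under the correspondence of Theorem~\ref{teoalmostcosy}, into a statement about $\Omega$ and $D$. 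I therefore expect the two extra conditions on the symplectic side, namely that $\Omega$ is \emph{closed} and that $D+\alpha I$ is an i.s.t., to be exactly the unwinding of $d\omega=2\alpha\,\eta\wedge\omega$.

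First I would set up both directions using the decomposition $\g=\mathbb{R}\xi\oplus\h$ with $\h=\ker\eta$, and compute $d\omega$ on $\g$ explicitly. The Chevalley--Eilenberg differential of the $2$-form $\omega$ is evaluated on triples of basis vectors, and since $\omega(\xi,\cdot)=0$ there are only two types of triples to check: $(x,y,z)$ with all three in $\h$, and $(\xi,x,y)$ with $x,y\in\h$. For the all-$\h$ triple, using $[x,y]\in\h$ and $\omega|_{\h}=\Omega$, one gets $d\omega(x,y,z)=-d\Omega(x,y,z)$, while the right-hand side $2\alpha(\eta\wedge\omega)(x,y,z)$ vanishes because $\eta|_{\h}=0$. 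Hence the all-$\h$ component of the equation $d\omega=2\alpha\,\eta\wedge\omega$ is precisely $d\Omega=0$, i.e. $\Omega$ is a symplectic (not merely almost symplectic) form on $\h$. For the mixed triple $(\xi,x,y)$, I would expand
\[
d\omega(\xi,x,y)=-\omega([\xi,x],y)-\omega(x,[\xi,y])-\omega([x,y],\xi),
\]
and using $[\xi,\cdot]=D$, $[x,y]\in\h$ and $\omega(\cdot,\xi)=0$ this becomes $-\Omega(Dx,y)-\Omega(x,Dy)$. Meanwhile $2\alpha(\eta\wedge\omega)(\xi,x,y)=2\alpha\,\Omega(x,y)$. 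So the mixed component of the equation reads
\[
\Omega(Dx,y)+\Omega(x,Dy)=-2\alpha\,\Omega(x,y)\qquad\text{for all }x,y\in\h.
\]
Rewriting $\Omega(x,Dy)=\alpha\,\Omega(x,y)$-type terms by collecting, this is exactly $\Omega((D+\alpha I)x,y)+\Omega(x,(D+\alpha I)y)=0$, i.e. $(D+\alpha I)^{T}\circ\Omega+\Omega\circ(D+\alpha I)=0$, which is the defining condition for $D+\alpha I$ to be an infinitesimal symplectic transformation.

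Having isolated these two equivalences, the forward direction is immediate: given an $\alpha$-cosymplectic $(\g,\eta,\omega)$, Theorem~\ref{teoalmostcosy} produces $(\h,\Omega,D)$, and the two computations above upgrade $\Omega$ to symplectic and force $D+\alpha I$ to be an i.s.t. Conversely, given a symplectic $(\h,\Omega)$ and a derivation $D$ with $D+\alpha I$ an i.s.t., Theorem~\ref{teoalmostcosy} reconstructs an almost cosymplectic $(\g,\eta,\omega)$ with $d\eta=0$, and reading the same two computations backwards shows both components of $d\omega-2\alpha\,\eta\wedge\omega$ vanish, so $(\g,\eta,\omega)$ is $\alpha$-cosymplectic. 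The bijectivity is inherited verbatim from Theorem~\ref{teoalmostcosy}, since I have only restricted that correspondence to matching sub-collections on each side. The one point requiring genuine care, rather than routine bookkeeping, is keeping the sign conventions and the index-raising in the i.s.t. condition consistent, so that the collected mixed-triple identity lands exactly on $D+\alpha I$ rather than $D-\alpha I$ or $D+2\alpha I$; this is where I would slow down and double-check the factor of $2$ and the placement of $\alpha$. Everything else is a direct specialization of the already-established almost cosymplectic correspondence.
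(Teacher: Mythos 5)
Your proposal is correct and takes essentially the same approach as the paper: both reduce everything to Theorem~\ref{teoalmostcosy} and then translate $d\omega=2\alpha\,\eta\wedge\omega$ into its two components, the all-$\h$ part giving $d\Omega=0$ and the mixed part $(\xi,x,y)$ giving the condition on $D+\alpha I$, where your skew form $\Omega((D+\alpha I)x,y)+\Omega(x,(D+\alpha I)y)=0$ is equivalent, by skew-symmetry of $\Omega$, to the paper's formulation that $F_{\Omega,D+\alpha I}$ is symmetric. The only blemish is the inconsequential sign slip $d\omega(x,y,z)=-d\Omega(x,y,z)$ (it should be $d\omega(x,y,z)=d\Omega(x,y,z)$), which does not affect the argument since that component is being equated to zero.
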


{\begin{proof}
We first start from an $\alpha$-cosymplectic Lie algebra $(\g,\eta, \omega)$. By the proof of Theorem~\ref{teoalmostcosy}, we know that  $\h:= \ker \eta$ is a Lie algebra and $\Omega:=\omega|_{\h \times \h}$ an almost symplectic structure on it. Moreover, from $d\omega=2\alpha \eta \wedge \omega$ it follows at once $d\Omega=0$ and so, $(\h,\Omega)$ is a symplectic Lie algebra.

\noindent
Next, again by the proof of Theorem~\ref{teoalmostcosy}, $D:=\ad _{\xi}|_{\h}$ is a derivation of $\h$. Finally, for all $x,y \in \h$ we find
\begin{align*}
2\alpha \Omega (x,y)&=d\omega(\xi,x,y)=-\omega([\xi,x],y)-\omega([x,y],\xi)-\omega([y,\xi],x)\\[4pt]
& = \omega([\xi,y],x)-\omega([\xi,x],y)=\Omega(D y,x)-\Omega(D x,y),
\end{align*}
that is, 
$$\Omega(D x,y)+\alpha \Omega(x,y)=\Omega(D y,x)+\alpha \Omega (y,x)$$
and so, $D+\alpha I$ is an infinitesimal symplectic transformation.

\medskip
Conversely, consider a $(2n)$-dimensional symplectic Lie algebra $(\h,\Omega)$ and suppose that there exists a derivation $D$ of $\h$ such that $D+\alpha I$ is an infinitesimal symplectic transformation. By the proof of Theorem~\ref{teoalmostcosy},  we know that \eqref{parentesi} defines a Lie algebra structure on $\g:= \mathbb R \xi \oplus \h$, admitting an almost cosymplectic structure $(\eta,\omega)$, defined by
$$\begin{array}{lll} 
&\begin{array}{lll} 
   &\omega(\xi, x)=0, &\quad \omega(x,y)=\Omega(x,y),  \\[2pt]
   &\eta(\xi)=1, &\quad  \eta(x)=0,
\end{array}
 &\quad \text{for all} \; x,y \in \h
\end{array}$$
and satisfying $d\eta=0$. We now check that  $d\omega=2\alpha \eta \wedge \omega$. In fact, for any $x,y,z \in \h$ we have at once $d\omega(x,y,z)=d\Omega(x,y,z)=0$. Moreover, $d\omega$ being a $3$-form, we have  $d\omega(\xi,\xi,x)=0$ for all $x\in \h$. Finally, using the fact that $D+\alpha I$ is an infinitesimal symplectic transformation, for all $x,y\in\h$ we find
\begin{align*}
	d\omega(\xi,x,y)&=-\omega([\xi,x],y)-\omega([x,y],\xi)-\omega([y,\xi],x)=-\omega(Dx,y)+\omega(Dy,x)\\[4pt]
	& = -\Omega(Dx,y)+\Omega(Dy,x)= \Omega (\alpha x,y)- \Omega (\alpha y,x)=2\alpha \Omega (x,y).
\end{align*}
Hence, $(\g,\eta,\omega)$ is an $\alpha$-cosymplectic Lie algebra and this concludes the proof.
\end{proof}
}

\noindent
In the special case $\alpha=0$, the above Theorem~\ref{teocosy} leads to the characterization of cosymplectic Lie algebras (see {also} \cite{BFM}).

\begin{remark}
{\em Consider a symplectic Lie algebra $(\h,\Omega)$, a derivation $D$ on $\h$ and the Lie algebra extension $\g:=\mathbb R \xi \ltimes \h$, defined by $[\xi ,x]=D(x)$ for all $x\in \h$. Extend $\Omega$ to a $2$-form $\omega$ on $\g$ setting $\omega(\xi,\cdot)=0$. Then, for all $x,y \in 	\h$, by direct calculation we have $(\mathcal{L}_{\xi} \omega )(\xi,x)=0=2\alpha \omega (\xi,x)$ and  
$$(\mathcal{L}_{\xi} \omega )(x,y)= -\Omega([\xi,x],y)-\Omega(x,[\xi,y])= -\Omega(D(x),y)-\Omega(x,D(y)),$$
so that $D+\alpha I$ is an infinitesimal symplectic transformation if and only if $\mathcal{L}_{\xi} \omega =2\alpha \omega$.
}\end{remark}

\begin{example}
{\em 
By the above Theorem~\ref{teocosy}, in order to find examples of $(2n+1)$-dimensional $\alpha$-cosymplectic (in particular, cosymplectic) Lie algebras, it suffices to start with a $(2n)$-dimensional symplectic Lie algebra $(\h,\Omega)$ and to find  derivations $D$ over $\h$, such that  $D+\alpha I$ (in particular, $D$) is an infinitesimal symplectic transformation. 


We explicitly observe that such a derivation always exists for $\alpha=0$: {one can then} consider $D=0$, for which  $F_{\Omega,D}=0$ is trivially symmetric.

A classification (up to isomorphisms) of nilpotent symplectic Lie algebras of dimension $\leq 6$ was given in \cite{KGM}. In particular, starting from each of the $24$ classes of $6$-dimensional symplectic Lie algebras described in \cite{KGM}, one can find  all the corresponding $7$-dimensional cosymplectic and $\alpha$-cosymplectic Lie algebras.

For example, let us consider the following $6$-dimensional Lie algebra:
$$\h : \quad [X_1,X_2] = X_3, \quad [X_1,X_3] = X_4, \quad [X_1,X_4] = X_5, \quad [X_1,X_5] = X_6.$$
By \cite{KGM}, the above Lie algebra $\h$ carries the symplectic form 
$$\Omega= X^{16}-X^{25}+X^{34},$$ 
unique up to isomorphisms, where $X^{ij}=X^i \wedge X^j$, for all indices $i,j$. 

Long but standard calculations yield that $D \in Der(\h)$ if and only if, with respect to the basis $\{X_i \}$, $D$ is described by the matrix
$$D=\left(\begin{array}{cccccc}
d_{1,1} & 0 & 0& 0& 0& 0 \\[2pt]
d_{2,1} & d_{2,2} & 0& 0& 0& 0 \\[2pt]
d_{3,1} & d_{3,2} & d_{1,1}+d_{2,2}& 0& 0& 0 \\[2pt]
d_{4,1} & d_{4,2} & d_{3,2}& 2d_{1,1}+d_{2,2}& 0& 0 \\[2pt]
d_{5,1} & d_{5,2} & d_{4,2}& d_{3,2} & 3d_{1,1}+d_{2,2}& 0 \\[2pt]
d_{6,1} & d_{6,2} & d_{5,2}& d_{4,2}&  d_{3,2}& 4d_{1,1}+d_{2,2} \\[2pt]
\end{array}\right),$$
which depends on $11$ real parameters. We then calculate $F_{\Omega,D}$ for the arbitrary derivation described above. Requiring  that $F_{\Omega,D}(X_i,X_j)=F_{\Omega,D}(X_j,X_i)$ for all indices $i \leq j=1,\dots,6$, we get $7$ algebraic equations for the parameters describing $D$, namely,
$$d_{1,1} = d_{2,2}= d_{3,1}= d_{4,2}= 0, \quad  d_{3,2} =d_{2,1}, \quad d_{5,2}= d_{4,1},   \quad  d_{6,2} = -d_{5,1}.$$
Therefore, derivations $D$ for which $F_{\Omega,D}$ is symmetric are all the ones of the form  
$$D=\left(\begin{array}{cccccc}
0 & 0 & 0& 0& 0& 0 \\[2pt]
p & 0 & 0& 0& 0& 0 \\[2pt]
0 & p& 0 & 0& 0& 0 \\[2pt]
q & 0 & p& 0 & 0& 0 \\[2pt]
r & q & 0 & p &0 & 0 \\[2pt]
s & -r & q& 0 & p& 0 \\[2pt]
\end{array}\right),$$
\noindent
where we put $d_{2,1}=p, d_{4,1}=q, d_{5,1}=r, d_{6,1}=s$.  Consequently, applying Theorem~\ref{teocosy}, we conclude that $(\h,\Omega)$ gives rise to a family of $7$-dimensional cosymplectic Lie algebras $(\g,\eta,\omega)$, depending on $4$ parameters $p,q,r,s$ {and} explicitly described by 
$$\begin{array}{rl} \g : & [X_0,X_1]=pX_2+qX_4+rX_5+sX_6, \quad [X_0,X_2]=pX_3+q X_5-rX_6, \\[6pt]
& [X_0,X_3]=pX_4+q X_6, \quad  [X_0,X_4]=pX_5, \quad [X_0,X_5]=p X_6, \;  \\[6pt]
& [X_1,X_2] = X_3, \quad [X_1,X_3] = X_4, \quad [X_1,X_4] = X_5, \quad [X_1,X_5] = X_6,
\end{array}$$
$$\eta=X^0, \qquad \omega= X^{16}-X^{25}+X^{34}.$$

More in general, if we now require  that $F_{\Omega,D+\alpha I}$ is symmetric, then we get all and the ones derivations $D$  of the form  
$$D=\left(\begin{array}{cccccc}
-\frac{2}{7}\alpha & 0 & 0& 0& 0& 0 \\[2pt]
p & -\frac{4}{7}\alpha & 0& 0& 0& 0 \\[2pt]
0 & p& -\frac{6}{7}\alpha & 0& 0& 0 \\[2pt]
q & 0 & p& -\frac{8}{7}\alpha & 0& 0 \\[2pt]
r & q & 0 & p &-\frac{10}{7}\alpha & 0 \\[2pt]
s & -r & q& 0 & p& -\frac{12}{7}\alpha \\[2pt]
\end{array}\right),$$
\noindent
and by Theorem~\ref{teocosy}, each of them  gives rise to a corresponding $\alpha$-cosymplectic Lie algebra. 
}\end{example}

\bigskip

We now consider almost $\alpha$-coK\"ahler Lie algebras and prove the following.

\begin{theorem}  \label{teoalmostcokah}
There exists a one-to-one correspondence between $(2n+1)$-dimensional almost {$\alpha$-coK\"ahler} Lie algebras $(\g,\varphi,\xi,\eta, g)$, and $(2n)$-dimensional almost K\"ahler Lie algebras $(\h, J, h)$, admitting a derivation $D\in Der(\h)$ such that $D+\alpha I$ is an {i.s.t.}
%
\end{theorem}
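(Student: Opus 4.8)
The plan is to build the metric (almost complex, almost Hermitian) layer directly on top of the purely symplectic correspondence already established in Theorem~\ref{teocosy}. The key observation is that an almost $\alpha$-coK\"ahler structure is precisely an almost contact metric structure $(\varphi,\xi,\eta,g)$ whose associated pair $(\eta,\Phi)$ is $\alpha$-cosymplectic; so $\Phi$ plays the role of $\omega$, and the entire skeleton of the bijection is inherited from Theorem~\ref{teocosy}. It only remains to check that $\varphi$ and $g$ restrict to (respectively, extend to) a compatible almost complex structure and inner product on $\h=\ker\eta$ whose fundamental $2$-form is the symplectic form of Theorem~\ref{teocosy}.

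For the forward direction, I would start with an almost $\alpha$-coK\"ahler Lie algebra $(\g,\varphi,\xi,\eta,g)$. Since $(\eta,\Phi)$ is $\alpha$-cosymplectic, Theorem~\ref{teocosy} already produces the symplectic Lie algebra $\h=\ker\eta$ with $\Omega:=\Phi|_{\h\times\h}$, together with the derivation $D:=\ad_\xi|_{\h}$ for which $D+\alpha I$ is an i.s.t. To equip $\h$ with an almost K\"ahler structure I would set $J:=\varphi|_{\h}$ and $h:=g|_{\h\times\h}$. The relation $\eta\circ\varphi=0$ forces $\varphi(\g)\subseteq\ker\eta$, so $J$ is well defined on $\h$; restricting $\varphi^2=-I+\eta\otimes\xi$ to $\ker\eta$ gives $J^2=-I$, so $J$ is an almost complex structure; and $h$ is positive definite as a restriction of $g$. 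Compatibility $h(JX,JY)=h(X,Y)$ on $\h$ is just \eqref{compg} with $\eta(X)=\eta(Y)=0$, and the fundamental $2$-form of $(J,h)$ is $h(X,JY)=g(X,\varphi Y)=\Phi(X,Y)=\Omega(X,Y)$, which is symplectic. Hence $(\h,J,h)$ is almost K\"ahler and carries the required derivation.

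For the converse, I would start from $(\h,J,h)$ almost K\"ahler with $D$ such that $D+\alpha I$ is an i.s.t. Applying Theorem~\ref{teocosy} to the symplectic form $\Omega$ yields the $\alpha$-cosymplectic Lie algebra $(\g=\mathbb R\xi\oplus\h,\eta,\omega)$ with $[\xi,x]=Dx$, $\omega|_{\h}=\Omega$ and $\omega(\xi,\cdot)=0$. I would then reconstruct the almost contact metric data by declaring $\varphi\xi:=0$, $\varphi|_{\h}:=J$, and taking $g$ to be the orthogonal sum of $h$ with $g(\xi,\xi)=1$ and $g(\xi,\h)=0$. A direct check gives $\eta(\xi)=1$, $\eta\circ\varphi=0$, and $\varphi^2=-I+\eta\otimes\xi$ (on $\h$ from $J^2=-I$, on $\xi$ trivially), while \eqref{compg} follows by splitting vectors along $\mathbb R\xi\oplus\h$ and using $h(JX,JY)=h(X,Y)$. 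Finally, writing $X=a\xi+x$, $Y=b\xi+y$ with $x,y\in\h$, we get $\Phi(X,Y)=g(X,\varphi Y)=g(X,Jy)=h(x,Jy)=\Omega(x,y)=\omega(X,Y)$, so $\Phi=\omega$ and $(\eta,\Phi)=(\eta,\omega)$ is $\alpha$-cosymplectic by Theorem~\ref{teocosy}. Thus $(\varphi,\xi,\eta,g)$ is almost $\alpha$-coK\"ahler.

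What makes the two assignments mutually inverse, and hence the correspondence genuinely one-to-one, is the automatic orthogonality of $\xi$: evaluating \eqref{compg} at $X=\xi$ and using $\varphi\xi=0$ gives $g(\xi,Y)=\eta(Y)$ for all $Y$, so in \emph{any} almost contact metric structure $\xi$ is a unit vector orthogonal to $\ker\eta$. Consequently $g$ is already the orthogonal sum of $g|_{\h}$ and the line $\mathbb R\xi$, so restricting and re-extending $g$ (and likewise $\varphi$) recover the original data, and the bijection of Theorem~\ref{teocosy} lifts verbatim to the metric setting. I do not expect a genuine obstacle here: the only points needing care are that $\varphi$ preserves $\ker\eta$ and that $\xi\perp\ker\eta$, both of which are \emph{forced} rather than imposed, after which every identity reduces to the almost Hermitian compatibility on $\h$ together with Theorem~\ref{teocosy}.
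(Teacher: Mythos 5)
Your proof is correct and follows essentially the same route as the paper: both directions piggyback on the $\alpha$-cosymplectic/symplectic correspondence of Theorem~\ref{teocosy}, restricting $\varphi$ and $g$ to $\h=\ker\eta$ to get $(J,h)$ and, conversely, extending $(J,h)$ by $\varphi\xi=0$ and the orthogonal splitting $\g=\mathbb R\xi\oplus\h$. Your closing remark that $g(\xi,Y)=\eta(Y)$ is forced by \eqref{compg}, so that restriction and re-extension are mutually inverse, is a point the paper leaves implicit but uses in exactly the same way.
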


\begin{proof}
Let $(\g,\varphi,\xi,\eta, g)$ be a $(2n+1)$-dimensional almost $\alpha$-coK\"ahler  Lie algebra. In particular, $(\g,\eta,\Phi)$ is then {$\alpha$-cosymplectic} and so, by the proof of Theorem~\ref{teocosy}, $(\h:=\ker \eta, \Phi|_{\h \times \h})$ is a symplectic Lie algebra. On $\h$, we define
$$J:=\varphi|_{\h}\qquad  \text{and} \qquad h:=g|_{\h\times \h} .$$
By the properties of $(\varphi,\xi,\eta, g)$ it then follows at once that $h$ is a positive definite inner product on 
$\h$, $J$ a complex structure on $\h$ such that $h(J\cdot,J\cdot)=h(\cdot,\cdot)$, and the fundamental $2$-form of $(J,h)$ is
$$\Omega(x,y):=h(x,Jy)=g(x,\varphi y)=\Phi(x,y) \quad \text{for all} \; x,y \in \h,$$
that is, $\Omega=\Phi|_{\h \times \h}$. Since, by hypothesis, {$d\Phi=2\alpha \eta \wedge \Phi$} and $\eta\wedge \Phi^n\neq 0$, one gets at once that $\Omega$ is a closed nondegenerate $2$-form on $\h$. Consequently, $(\h,J,h)$ is an almost K\"ahler Lie algebra. Finally, again from the proof of Theorem~\ref{teocosy}, we have that $D:=\ad_\xi|_{\h}$ is a derivation of $\h$, such that {$D+\alpha I$ is an infinitesimal symplectic transformation.}

Conversely, proceeding in a similar way to the proof of Theorem~\ref{teocosy}, starting from an almost K\"ahler Lie algebra $(\h, J, h)$, together with a derivation $D\in Der(\h)$ such that {$D+\alpha I$ is an infinitesimal symplectic transformation,} we consider $(\g,\varphi,\xi,\eta,g)$ defined as follows:
\begin{align*}
& \g= \mathbb R \xi \oplus \h, {\rm \quad with \quad} [x,y]=[x,y]_\h, \ \  [\xi,x]=Dx, \quad \text{for all}\; x,y \in \h,\\[4pt] 
&\eta(\xi)=1,\ \eta|_\h:=0, \quad\quad \varphi(\xi)=0,\  \varphi|_\h=J,\quad\quad g(\xi,x)=\eta(x), \  g|_{\h\times \h}=h.
\end{align*}
\noindent
It is now easy to check that $(\varphi,\xi,\eta,g)$  is an almost contact metric structure, $d\eta=0$ and $\Phi:=g(\cdot,\varphi \cdot)$ satisfies $\Phi|_\h=\Omega$. Since $\Omega$ is a closed nondegenerate $2$-form, we have again $\eta\wedge \Phi^n \neq 0$. Moreover, taking into account  the fact  that {$D+\alpha I$ is an infinitesimal symplectic transformation, {it is easily seen that} $d\Phi=2 \alpha \eta \wedge \Phi$} and this ends the proof.
\end{proof}

By Theorem~\ref{teoalmostcokah}, the problem of determining $(2n+1)$-dimensional almost  $\alpha$-coK\"ahler Lie algebras, reduces to the one of determining $(2n)$-dimensional almost K\"ahler Lie algebras {together with} some suitable derivations. 
{Explicit examples of almost  $\alpha$-coK\"ahler Lie algebras will be given in the next section.}


\smallskip
It is well known that tensor  $\mathcal{L}_{\xi} \varphi$ plays an important role in the geometry of almost contact metric manifolds (see for example \cite{B}). We now consider  almost $\alpha$-coK\"ahler Lie algebras $(\g,\varphi,\xi,\eta, g)$ such that $\mathcal L_\xi \varphi =0$. In particular, we recall that  a {\em $K$-cosymplectic Lie algebra} is an almost coK\"ahler Lie algebra, whose Reeb vector field $\xi$ is Killing \cite{BG}. We first prove the following.

\begin{proposition} \label{propA}
Let $(\g,\varphi,\xi,\eta, g)$ denote an almost $\alpha$-coK\"ahler Lie algebra and $(\h=\ker \eta, J,h)$  the corresponding almost K\"ahler Lie algebra as in Theorem~{\em\ref{teoalmostcokah}}. Then, the following properties are equivalent:
\begin{itemize}
\vspace{4pt}\item[i)] $\mathcal L_\xi g=2\alpha(g - \eta \otimes \eta)$;
\vspace{4pt}\item[ii)] $(D+\alpha I)$ is skew-adjoint with respect to $h$, where $D:=\ad_\xi|_\h$;
\vspace{4pt}\item[iii)] $DJ=JD$;
\vspace{4pt}\item[iv)] $\mathcal L_\xi \varphi=0$.
\end{itemize}
In particular, for an almost coK\"ahler Lie algebra $(\g,\varphi,\xi,\eta, g)$ {(that is, when $\alpha=0$), by property {\em i)} we have that} $\xi$ is Killing, that is, $(\g,\varphi,\xi,\eta, g)$ is $K$-cosymplectic.
\end{proposition}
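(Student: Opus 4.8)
The plan is to reduce all four conditions to purely algebraic relations between the operator $D=\ad_\xi|_\h$, the complex structure $J$, and the inner product $h$ on $\h$, and then to chain the equivalences i) $\Leftrightarrow$ ii) $\Leftrightarrow$ iii) $\Leftrightarrow$ iv). Throughout I would exploit that, for left-invariant tensors, the Lie derivative along $\xi$ becomes algebraic: $(\mathcal L_\xi g)(X,Y)=-g([\xi,X],Y)-g(X,[\xi,Y])$ and $(\mathcal L_\xi\varphi)(X)=[\xi,\varphi X]-\varphi[\xi,X]$, combined with the structural data $[\xi,x]=Dx$, $\varphi|_\h=J$, $\varphi(\xi)=0$, $g|_{\h\times\h}=h$, $g(\xi,\cdot)=\eta$, and the fact, coming from Theorem~\ref{teoalmostcokah}, that $B:=D+\alpha I$ is an infinitesimal symplectic transformation on $(\h,\Omega)$ with $\Omega=\Phi|_{\h\times\h}$.

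For i) $\Leftrightarrow$ ii), I would evaluate $\mathcal L_\xi g$ on the three types of pairs $(\xi,\xi)$, $(\xi,x)$ and $(x,y)$ with $x,y\in\h$. On the first two the left-hand side vanishes identically (using $[\xi,\xi]=0$, $Dx\in\h$ and $\eta|_\h=0$), and so does the right-hand side $2\alpha(g-\eta\otimes\eta)$, so these pairs give no information. On $\h\times\h$ one finds $(\mathcal L_\xi g)(x,y)=-h(Dx,y)-h(x,Dy)$, whereas the right-hand side equals $2\alpha\,h(x,y)$; hence i) is equivalent to $h((D+\alpha I)x,y)+h(x,(D+\alpha I)y)=0$ for all $x,y\in\h$, which is precisely the $h$-skew-adjointness of $B=D+\alpha I$, i.e. ii).

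The step ii) $\Leftrightarrow$ iii) is the crux, and the only place where the i.s.t. hypothesis is genuinely used. Writing $\Omega(x,y)=h(x,Jy)$ and letting $B^{*}$ denote the $h$-adjoint of $B$, the condition $\Omega(Bx,y)+\Omega(x,By)=0$ translates, by nondegeneracy of $h$, into the operator identity $B^{*}J+JB=0$, that is $B^{*}J=-JB$. If $B$ is $h$-skew-adjoint, so $B^{*}=-B$, this gives at once $BJ=JB$; conversely, if $BJ=JB$, then $B^{*}J=-JB=-BJ$ yields $(B+B^{*})J=0$, and the invertibility of $J$ forces $B^{*}=-B$. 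Since $\alpha I$ commutes with $J$, the relation $BJ=JB$ is the same as $DJ=JD$, which is iii).

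For iii) $\Leftrightarrow$ iv), I would compute $\mathcal L_\xi\varphi$ directly: it vanishes on $\xi$, while on $x\in\h$ one gets $(\mathcal L_\xi\varphi)(x)=D(Jx)-J(Dx)=(DJ-JD)x$, so $\mathcal L_\xi\varphi=0$ is exactly $DJ=JD$. The concluding assertion then follows by setting $\alpha=0$: property i) reduces to $\mathcal L_\xi g=0$, which is the defining condition for $\xi$ to be Killing, so an almost coK\"ahler Lie algebra satisfying any (hence all) of i)--iv) is $K$-cosymplectic, in agreement with Proposition~\ref{lemmaBG}. I expect the main obstacle to be the ii) $\Leftrightarrow$ iii) step, where one must correctly combine the i.s.t. identity with skew-adjointness through the adjoint $B^{*}$ and invoke the invertibility of $J$; the remaining equivalences are routine evaluations of left-invariant Lie derivatives.
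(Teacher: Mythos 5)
Your proof is correct, but it follows a genuinely different route from the paper's. The paper makes condition iv) the hub: it computes $\mathcal L_\xi g$ algebraically (your same formula $-h(Dx,y)-h(Dy,x)$), but then invokes the manifold-level Riemannian identity $\nabla\xi=-\alpha\varphi^2-\tfrac12\varphi\mathcal L_\xi\varphi$ (cited from the literature on almost $\alpha$-coK\"ahler structures, together with the symmetry of $\mathcal L_\xi\varphi$, the skew-adjointness of $\varphi$ and the anticommutation of $\mathcal L_\xi\varphi$ with $\varphi$) to derive $\mathcal L_\xi g = 2\alpha(g-\eta\otimes\eta)-g(\varphi\mathcal L_\xi\varphi\,\cdot,\cdot)$; comparing the two expressions for $\mathcal L_\xi g$ then gives i)$\Leftrightarrow$iv) and ii)$\Leftrightarrow$iv), with iii)$\Leftrightarrow$iv) proved directly exactly as you do. You instead make ii) the hub and never touch the Levi-Civita connection: your i)$\Leftrightarrow$ii) is the same elementary evaluation of $\mathcal L_\xi g$, and your crux ii)$\Leftrightarrow$iii) encodes the i.s.t.\ hypothesis (legitimately available from Theorem~\ref{teoalmostcokah}) as the operator identity $B^{*}J+JB=0$ for $B=D+\alpha I$, from which skew-adjointness and commutation with $J$ imply each other via the invertibility of $J$. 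What your approach buys is self-containedness: everything is linear algebra on $\h$, with the i.s.t.\ identity (which at the Lie-algebra level is exactly $d\Phi=2\alpha\,\eta\wedge\Phi$) as the only structural input; in effect you have upgraded the paper's Remark~\ref{p13} into a two-sided equivalence. What the paper's route buys is the general geometric identity \eqref{2}, valid on any almost $\alpha$-coK\"ahler manifold, which yields i)$\Leftrightarrow$iv) without reference to the correspondence with $(\h,\Omega,D)$ at all; your i)$\Leftrightarrow$iv), by contrast, genuinely passes through the i.s.t.\ property. Both proofs are complete and the final Killing statement for $\alpha=0$ is handled identically.
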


\begin{proof}
Let $(\g,\varphi,\xi,\eta, g)$ be an almost $\alpha$-coK\"ahler Lie algebra and consider $\h:=\ker\eta$, $J:=\varphi|_h$, $h:=g|_{\h \times \h}$ and $D:=\ad_\xi|_\h \in Der(\h)$. 

For any $x,y \in \g$, we get
\begin{align} \label{1}
	(\mathcal  L_\xi g)(x,y)= -g([\xi,x],y)-g([\xi,y],x)= -h(Dx,y)-h(Dy,x).
\end{align}

\noindent 
Moreover, it is well known (see for example \cite{CNY},\cite{Dileo})  that the Levi-Civita connection of $g$ satifies $ \nabla \xi= - \alpha \varphi^2- \frac{1}{2}\varphi\mathcal{L}_{\xi} \varphi$, that $\mathcal L_\xi \varphi$ is symmetric, $\varphi$ is skew-adjoint with respect to $g$ and $\mathcal{L}_{\xi} \varphi$ anti-commutes with $\varphi$. Consequently, for any $x,y \in \g$ we get
\begin{align*}
 (\mathcal L_\xi g)(x,y)&=(g(\nabla_x \xi,y) + g( x,\nabla_y \xi))\\[4pt]
	&=g(- \alpha \varphi^2 x -\frac{1}{2}\varphi\mathcal{L}_{\xi} \varphi x,y) + g(x, -\alpha \varphi^2 y- \frac{1}{2}\varphi\mathcal{L}_{\xi} \varphi y)\\[4pt]
	&= -\alpha \big( g(-x+\eta(x)\xi,y)+g(x, -y+\eta(y)\xi)\big) -  g(\varphi \mathcal{L}_{\xi} \varphi x,y)\\[4pt]
	&= 2\alpha \big( g(x,y)- \eta(x) \eta(y)\big) -  g(\varphi \mathcal{L}_{\xi} \varphi x,y).
\end{align*}
So, we have
\begin{align} \label{2}
	\mathcal L_\xi g	= 2\alpha \big( g -\eta \otimes \eta) -  g(\varphi \mathcal{L}_{\xi} \varphi\  \cdot,\cdot).
\end{align}
Equation \eqref{2} yields the equivalence between i) and iv). Moreover, comparing Equations \eqref{1} and \eqref{2}, for any $x,y\in \h$ we have
\begin{align*}
-h(Dx,y)-h(Dy,x) = 2\alpha h(x,y) - h(\varphi \mathcal{L}_{\xi} \varphi x,y).
\end{align*}
that is,
\begin{align*}
  h(\varphi \mathcal{L}_{\xi} \varphi x,y)= h((D+\alpha I)x,y) +h((D+\alpha I)y,x),
\end{align*}
{which, since $(\mathcal L_\xi \varphi) \xi=0,$ implies that} $\mathcal{L}_{\xi} \varphi=0$ if and only if $(D+\alpha I)$ is skew-adjoint with respect $h$.

Finally, iv) implies iii) because $\mathcal L_\xi \varphi=\ad_\xi \circ \varphi-\varphi \circ \ad_\xi$ and  
$J=\varphi|_{\h}$, and to prove the  converse it suffices to observe that $\varphi \xi=\ad_\xi \xi=0$. 
\end{proof}

\begin{remark}\label{p13}{\em
If an almost K\"ahler Lie algebra $(\h,J,h)$ admits a derivation $D$ such that conditions  ii) and iii) of  Proposition~\ref{propA} hold, then $D+\alpha I$ is necessarily an {i.s.t.} In fact, taking also into account that $J^2=-Id$ and that $J$ is compatible with $h$, we get, for any $x,y \in \h$, 
\begin{align*}
	F_{\Omega,D+\alpha I}(x,y)&=\Omega((D+\alpha I)x,y)=h((D+\alpha I)x,Jy)=h(J(D+\alpha I)x,J^2y) \\[4pt]
	&=-h(J(D+\alpha I)x,y)= -h((D+\alpha I)Jx,y)=h(Jx,(D+\alpha I)y)\\[4pt]
	&=\Omega((D+\alpha I)y,x)=F_{\Omega,D+\alpha I}(y,x).
\end{align*}
By  a similar argument one can prove that if an almost K\"ahler  Lie algebra $(\h,J,h)$ admits a derivation $D$ such that $D+\alpha I$  is an i.s.t. and $D$ commutes with $J$, then condition ii) of {Proposition~\ref{propA}} holds.
}\end{remark}

We now prove the following characterization.

\begin{theorem}  \label{teoKcosy}
There exists a one-to-one correspondence between  $(2n+1)$-dimensional almost $\alpha$-coK\"ahler Lie algebras $(\g,\varphi,\xi,\eta, g)$ with $\mathcal L_\xi \varphi=0$ (in particular, when $\alpha=0$, $K$-cosymplectic Lie algebras), and $(2n)$-dimensional almost K\"ahler Lie algebras $(\h, J, h)$, admitting a derivation $D\in Der(\h)$ such that {$D+\alpha I$ is an i.s.t.} and $DJ=JD$.
\end{theorem}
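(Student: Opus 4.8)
The plan is to combine Theorem~\ref{teoalmostcokah} with the equivalences already established in Proposition~\ref{propA}, so that the present theorem becomes essentially a packaging of those two results together with the observation recorded in Remark~\ref{p13}. The target correspondence differs from the one in Theorem~\ref{teoalmostcokah} only by the extra condition $\mathcal L_\xi \varphi=0$ on the odd-dimensional side and the extra condition $DJ=JD$ on the even-dimensional side, so the whole argument amounts to showing that, under the correspondence of Theorem~\ref{teoalmostcokah}, these two extra conditions match up.

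First I would invoke Theorem~\ref{teoalmostcokah} to get the underlying bijection between $(2n+1)$-dimensional almost $\alpha$-coK\"ahler Lie algebras $(\g,\varphi,\xi,\eta,g)$ and $(2n)$-dimensional almost K\"ahler Lie algebras $(\h,J,h)$ equipped with a derivation $D\in Der(\h)$ for which $D+\alpha I$ is an i.s.t. Under this bijection one has $\h=\ker\eta$, $J=\varphi|_\h$, $h=g|_{\h\times\h}$ and $D=\ad_\xi|_\h$. For the forward direction, suppose the almost $\alpha$-coK\"ahler Lie algebra additionally satisfies $\mathcal L_\xi \varphi=0$; this is precisely property iv) of Proposition~\ref{propA}, which is equivalent to property iii), namely $DJ=JD$. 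Hence the corresponding datum $(\h,J,h,D)$ satisfies both $D+\alpha I$ i.s.t. (from Theorem~\ref{teoalmostcokah}) and $DJ=JD$, as required.

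For the converse direction, I would start from an almost K\"ahler Lie algebra $(\h,J,h)$ with a derivation $D$ such that $D+\alpha I$ is an i.s.t. and $DJ=JD$, and build $(\g,\varphi,\xi,\eta,g)$ exactly as in the converse part of Theorem~\ref{teoalmostcokah}; this already yields an almost $\alpha$-coK\"ahler Lie algebra with $D=\ad_\xi|_\h$, $J=\varphi|_\h$. The condition $DJ=JD$ is again property iii) of Proposition~\ref{propA}, so by that proposition property iv), $\mathcal L_\xi \varphi=0$, holds as well, and the constructed Lie algebra lies in the desired subclass. Finally I would note that these two constructions are mutually inverse because they are the restrictions of the mutually inverse constructions of Theorem~\ref{teoalmostcokah} to the sub-collections cut out by the matching conditions $\mathcal L_\xi\varphi=0$ and $DJ=JD$; no new constructions are introduced, so no new inverse check is needed. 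The specialization to $\alpha=0$, giving $K$-cosymplectic Lie algebras, follows from the last sentence of Proposition~\ref{propA}.

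The only genuinely delicate point is the logical dependence on Proposition~\ref{propA}: that proposition asserts the equivalence of iii) and iv) for any almost $\alpha$-coK\"ahler Lie algebra and its associated almost K\"ahler data, and here I am applying it in both directions of the correspondence, so I must make sure that in the converse direction the constructed $(\g,\varphi,\xi,\eta,g)$ genuinely is almost $\alpha$-coK\"ahler before quoting the proposition. This is guaranteed by Theorem~\ref{teoalmostcokah}, since $D+\alpha I$ being an i.s.t. is exactly the hypothesis needed there. One subtlety worth flagging, rather than a real obstacle, is that Remark~\ref{p13} shows $DJ=JD$ together with $D+\alpha I$ i.s.t. already forces condition ii) of Proposition~\ref{propA}; thus the hypotheses of the theorem are not redundant but are internally consistent, and the reader could alternatively phrase the even-dimensional condition via skew-adjointness of $D+\alpha I$. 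I expect the main care to go into stating cleanly that the bijection here is the honest restriction of the earlier bijection, so that bijectivity is inherited rather than reproved.
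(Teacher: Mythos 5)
Your proposal is correct and follows essentially the same route as the paper's own proof: both directions combine the correspondence of Theorem~\ref{teoalmostcokah} with the equivalence of conditions iii) and iv) in Proposition~\ref{propA}, applied to the given (respectively, constructed) almost $\alpha$-coK\"ahler Lie algebra. Your additional remarks on the inherited bijectivity and on Remark~\ref{p13} are consistent with, but not needed beyond, what the paper does.
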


\begin{proof}
Let $(\g,\varphi,\xi,\eta, g)$ be a $(2n+1)$-dimensional almost $\alpha$-coK\"ahler Lie algebra with $\mathcal L_\xi \varphi=0$. Then, by Theorem~\ref{teoalmostcokah},  it determines a  $(2n)$-dimensional almost K\"ahler Lie algebra $(\h,J,h)$, admitting a derivation $D\in Der(\h)$ such that  $D+\alpha I$ is an {i.s.t.} Besides, by Proposition~\ref{propA}  we conclude that $DJ=JD$.

Conversely,  let us start from a $(2n)$-dimensional almost K\"ahler Lie {algebra} $(\h, J, h)$, admitting a derivation $D\in Der(\h)$ such that   $D+\alpha I$ is an {i.s.t.} and $DJ=JD$. By  Theorem~\ref{teoalmostcokah},  it determines a $(2n+1)$-dimensional almost $\alpha$-coK\"ahler Lie algebra $(\g,\varphi,\xi,\eta, g)$. Futhermore, by Proposition~\ref{propA}, the condition $DJ=JD$ implies that {$\mathcal L_\xi \varphi=0$ and this completes the proof.}

\end{proof}

\noindent
Applying the above Theorem~\ref{teoKcosy}, an explicit classification of four-dimensional strictly $K$-cosymplectic Lie algebras will be given in Section~5.
 
\medskip
Next, we observe that for  {an almost $\alpha$-coK\"ahler}  Lie algebra  $(\g,\varphi,\xi,\eta,g)$ with $\mathcal L_\xi \varphi=0$, the normality condition \eqref{normal}  (that is, as $d\eta=0$, $N_\varphi=0$)  is equivalent to the integrability  condition $N_J=0$ of the corresponding almost K\"ahler Lie algebra $(\h=\ker\eta,J=\varphi|_{\h},h)$. 

In fact, as $J=\varphi|_{\h}$, we get at once that $N_\varphi=0$ implies $N_J=0$. Conversely, for any $x,y \in \ker\eta$  we have $N_\varphi(x,y)=N_J(x,y)=0$.  Moreover, since $\mathcal L_\xi \varphi=0$, we  also {have}
\begin{align*}
	N_\varphi(\xi,x)&= [\varphi \xi, \varphi x] - \varphi[\varphi \xi, x] - \varphi[\xi, \varphi x] + \varphi^2[\xi,x]= \varphi(\varphi[\xi,x] -[\xi,\varphi x]) \\[4pt]
	&  =\varphi ((\mathcal L_\xi \varphi)x)=0.
\end{align*}

Thus, Theorem~\ref{teoKcosy} yields the following.

\begin{theorem}\label{teocokah}
There exists a one-to-one correspondence between $(2n+1)$-dim\-ensional $\alpha$-coK\"ahler Lie algebras $(\g,\varphi,\xi,\eta, g)$, and 
$(2n)$-dim\-en\-sional K\"ahler Lie algebras $(\h, J, h)$, admitting a derivation $D\in Der(\h)$ such that $D+\alpha I$ is an {i.s.t.}  and $DJ=JD$.
%
\end{theorem}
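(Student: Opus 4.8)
The plan is to obtain Theorem~\ref{teocokah} by feeding the normality (equivalently, integrability) information into Theorem~\ref{teoKcosy}, which already accounts for the underlying almost K\"ahler datum and its derivation. Recall that an $\alpha$-coK\"ahler Lie algebra is, by definition, a \emph{normal} almost $\alpha$-coK\"ahler Lie algebra, and that since $d\eta=0$ the normality condition \eqref{normal} reduces to $N_\varphi=0$. Since the correspondence of Theorem~\ref{teoKcosy} is stated for almost $\alpha$-coK\"ahler Lie algebras satisfying $\mathcal L_\xi\varphi=0$, the first thing I would establish is that any $\alpha$-coK\"ahler Lie algebra automatically satisfies $\mathcal L_\xi\varphi=0$. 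Once this is available the theorem follows formally: Theorem~\ref{teoKcosy} produces the almost K\"ahler Lie algebra $(\h=\ker\eta,\,J=\varphi|_\h,\,h)$ together with the derivation $D=\ad_\xi|_\h$ for which $D+\alpha I$ is an i.s.t.\ and $DJ=JD$, while the integrability equivalence proved just above the statement upgrades $N_\varphi=0$ to $N_J=0$, that is, upgrades almost K\"ahler to K\"ahler.

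For the forward implication, let $(\g,\varphi,\xi,\eta,g)$ be $\alpha$-coK\"ahler, so $N_\varphi=0$. The one substantive point is to deduce $\mathcal L_\xi\varphi=0$. I would compute $N_\varphi(\xi,\cdot)$ directly: using $\varphi\xi=0$ one finds $N_\varphi(\xi,x)=-\varphi\big((\mathcal L_\xi\varphi)x\big)$ for all $x$, which is exactly the identity already used (up to a sign) in the paragraph preceding the theorem. Hence $N_\varphi=0$ forces $\varphi\big((\mathcal L_\xi\varphi)x\big)=0$, so $(\mathcal L_\xi\varphi)x\in\ker\varphi=\mathbb R\xi$, the last equality coming from $\varphi^2=-I+\eta\otimes\xi$. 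Applying $\eta$ and using $\eta\circ\varphi=0$ together with $d\eta=0$ gives $\eta\big((\mathcal L_\xi\varphi)x\big)=-\,d\eta(\xi,\varphi x)=0$, whence $(\mathcal L_\xi\varphi)x=0$; as $(\mathcal L_\xi\varphi)\xi=0$ as well, we conclude $\mathcal L_\xi\varphi=0$. (Alternatively, one may invoke the classical fact that normality implies the vanishing of $N^{(3)}=\mathcal L_\xi\varphi$.) With $\mathcal L_\xi\varphi=0$ in hand, Theorem~\ref{teoKcosy} supplies the almost K\"ahler data, and the equivalence turns $N_\varphi=0$ into $N_J=0$, so $(\h,J,h)$ is K\"ahler.

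For the converse, I would start from a K\"ahler Lie algebra $(\h,J,h)$ with a derivation $D$ such that $D+\alpha I$ is an i.s.t.\ and $DJ=JD$, apply Theorem~\ref{teoKcosy} to obtain an almost $\alpha$-coK\"ahler Lie algebra $(\g,\varphi,\xi,\eta,g)$ with $\mathcal L_\xi\varphi=0$, and then run the same equivalence in the reverse direction: $N_J=0$ together with $\mathcal L_\xi\varphi=0$ yields $N_\varphi=0$, which with $d\eta=0$ is precisely the normality condition \eqref{normal}. Thus $(\g,\varphi,\xi,\eta,g)$ is $\alpha$-coK\"ahler. Finally, since both constructions are those of Theorems~\ref{teoalmostcokah} and \ref{teoKcosy}, which are already mutually inverse, restricting to the subclasses cut out by $N_\varphi=0$ and $N_J=0$ remains bijective. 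I expect the only genuine obstacle to be the verification that normality forces $\mathcal L_\xi\varphi=0$; everything else is a bookkeeping combination of results already proved.
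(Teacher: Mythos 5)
Your proposal is correct and follows essentially the same route as the paper: it derives the statement by combining Theorem~\ref{teoKcosy} with the equivalence $N_\varphi=0 \Leftrightarrow N_J=0$ established in the paragraph preceding the theorem. The only difference is that you explicitly prove that normality together with $d\eta=0$ forces $\mathcal{L}_\xi\varphi=0$ (via $N_\varphi(\xi,x)=-\varphi((\mathcal{L}_\xi\varphi)x)$, $\ker\varphi=\mathbb{R}\xi$, and $\eta((\mathcal{L}_\xi\varphi)x)=0$) --- a step the paper leaves tacit when it passes from the normality assumption to the hypotheses of Theorem~\ref{teoKcosy} --- and your verification of that step is correct.
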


\noindent
The above result extends the characterization of coK\"ahler structures proved in  \cite[Theorem 6.1]{FV}.

\section{Isomorphisms of cosymplectic and $\alpha$-cosymplectic Lie algebras}
\setcounter{equation}{0}

We start with the following.

\begin{definition}\label{iso} 
{\em a) Two almost {cosymplectic (in particular, two $\alpha$-cosymplectic)} Lie \ algebras   $(\g_1,\eta_1,\omega_1)$ and $(\g_2,\eta_2,\omega_2)$ are {\em isomorphic} if there exists a Lie algebra isomorphism $\Psi: \g_1 \to \g_2$ such that

\smallskip
\centerline{ 1) $\Psi^* \omega_2=\omega_1$\quad and \quad 2) $\Psi^*\eta_2(=\eta_2 \circ \Psi)=\eta_1$.}

\medskip
b) Two (almost) $\alpha$-coK\"ahler Lie algebras $(\g_1,\varphi_1,\xi_1,\eta_1,g_1)$ and $(\g_2,\varphi_2,\xi_2,\eta_2,g_2)$ are {\em isomorphic} if there exists  a Lie algebra isomorphism $\Psi: \g_1 \to \g_2$ such that 
$$\begin{array}{ll}
 1)\  \Psi \circ \varphi_1=\varphi_2 \circ \Psi, & 2)\  \Psi^*\eta_2(=\eta_2 \circ \Psi)=\eta_1,\\[4pt]
3)\  \Psi \xi_1=\xi_2,\quad  & 4)\  \Psi^* g_2=g_1 \, (\textrm{i.e.,} \ \Psi {\textrm{\ is\  an\  isometry}}).
\end{array}$$

\medskip
c) Two (almost) symplectic Lie algebras $(\h_1,\Omega_1)$ and $(\h_2,\Omega_2)$ are {\em isomorphic} if there exists a Lie algebra isomorphism $\psi: \h_1 \to \h_2$ such that $\psi^* \Omega_2=\Omega_1$.

\medskip
d) Two (almost) K\"ahler Lie algebras $(\h_1,J_1,h_1)$ and $(\h_2,J_2,h_2)$ are {\em isomorphic} if there exists a Lie algebra isomorphism $\psi: \h_1 \to \h_2$ such that 

\smallskip
\centerline{ 1) $\psi \circ J_1=J_2 \circ \psi$ \quad and \quad 2) $\psi^* h_2=h_1$.}

\smallskip\noindent
In such a case, we observe that conditions $1)$ and $2)$ imply that $\psi^* \Omega_2=\Omega_1$, where $\Omega_1$ and $\Omega_2$ are the fundamental $2$-form of $h_1$ and $h_2$ respectively. 
}
\end{definition}

\noindent
We now prove the following.

\begin{theorem} \label{isoalmostcosy}
Let $(\g_1,\eta_1,\omega_1)$ and $(\g_2,\eta_2,\omega_2)$ two almost cosymplectic {(respectively, $\alpha$-cosymplectic)} Lie algebras with $d\eta_1=d\eta_2=0$, and $(\h_1,\Omega_1,D_1)$ and $(\h_2,\Omega_2,D_2)$ the corresponding almost symplectic Lie algebras and derivations as in Theorem~{\em\ref{teoalmostcosy}} {(respectively, Theorem~{\em\ref{teocosy}})}. Then, {$(\g_1,\eta_1,\omega_1)$ and $(\g_2,\eta_2,\omega_2)$ are isomorphic}  if and only if there exists an isomorphism $\psi: \h_1 \to \h_2$ of almost symplectic Lie algebras, such that $\psi \circ D_1= D_2 \circ \psi$.
\end{theorem}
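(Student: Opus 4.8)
The plan is to read everything off the explicit dictionary furnished by Theorems~\ref{teoalmostcosy} and \ref{teocosy}: there $\h_i=\ker\eta_i$ with $\Omega_i=\omega_i|_{\h_i\times\h_i}$, the Reeb field $\xi_i$ is the unique vector satisfying $\eta_i(\xi_i)=1$ and $i_{\xi_i}\omega_i=0$, the derivation is $D_i=\ad_{\xi_i}|_{\h_i}$, and $\g_i=\mathbb R\xi_i\oplus\h_i$ with bracket as in \eqref{parentesi}. Both implications will then be verified directly against Definition~\ref{iso}; once the Reeb vectors are matched, nothing beyond linear algebra and the homomorphism property of the maps is required.

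For the \lq\lq only if\rq\rq\ direction I would start from an isomorphism $\Psi:\g_1\to\g_2$ of almost cosymplectic Lie algebras. From $\Psi^*\eta_2=\eta_1$ I get $\eta_2(\Psi x)=\eta_1(x)=0$ for every $x\in\h_1=\ker\eta_1$, so $\Psi(\h_1)\subseteq\h_2$, and equality follows since $\Psi$ is bijective and $\dim\h_1=\dim\h_2$. Hence $\psi:=\Psi|_{\h_1}:\h_1\to\h_2$ is a Lie algebra isomorphism, and for $x,y\in\h_1$ one computes $(\psi^*\Omega_2)(x,y)=\omega_2(\Psi x,\Psi y)=(\Psi^*\omega_2)(x,y)=\omega_1(x,y)=\Omega_1(x,y)$, so $\psi$ is an isomorphism of (almost) symplectic Lie algebras. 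The crucial step is to identify $\Psi\xi_1=\xi_2$: since $\eta_2(\Psi\xi_1)=\eta_1(\xi_1)=1$ and, for every $w\in\g_1$, $\omega_2(\Psi\xi_1,\Psi w)=(\Psi^*\omega_2)(\xi_1,w)=\omega_1(\xi_1,w)=0$, the element $\Psi\xi_1$ satisfies the two defining conditions of the Reeb field of $(\g_2,\eta_2,\omega_2)$, so by its uniqueness $\Psi\xi_1=\xi_2$. Then for $x\in\h_1$ one has $\psi(D_1x)=\Psi[\xi_1,x]=[\Psi\xi_1,\Psi x]=[\xi_2,\psi x]=D_2(\psi x)$, that is, $\psi\circ D_1=D_2\circ\psi$.

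For the \lq\lq if\rq\rq\ direction, given an isomorphism $\psi:\h_1\to\h_2$ with $\psi^*\Omega_2=\Omega_1$ and $\psi\circ D_1=D_2\circ\psi$, I would define the linear isomorphism $\Psi:\g_1\to\g_2$ by $\Psi\xi_1:=\xi_2$ and $\Psi|_{\h_1}:=\psi$. Using \eqref{parentesi}, $\Psi$ is a Lie algebra homomorphism: on $\h_1\times\h_1$ this is just the homomorphism property of $\psi$, while $\Psi[\xi_1,x]=\psi(D_1x)=D_2(\psi x)=[\xi_2,\psi x]=[\Psi\xi_1,\Psi x]$ handles the mixed brackets. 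Finally $\Psi^*\eta_2=\eta_1$ (both sides send $\xi_1\mapsto 1$ and vanish on $\h_1$) and $\Psi^*\omega_2=\omega_1$ (both vanish whenever one argument is a multiple of $\xi$, and agree on $\h_1\times\h_1$ by $\psi^*\Omega_2=\Omega_1$), so $\Psi$ is an isomorphism of almost cosymplectic Lie algebras.

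The $\alpha$-cosymplectic case then follows verbatim, because Definition~\ref{iso}(a) imposes exactly the same two conditions on $\Psi$; I would only remark that an isomorphism automatically preserves $\alpha$ (from $\Psi^*d\omega_2=d\omega_1$ together with nondegeneracy of $\eta_i\wedge\omega_i^{\,n}$), and that the extra infinitesimal-symplectic-transformation requirement on $D_i+\alpha I$ is carried along for free, being already part of the hypothesis that both algebras are $\alpha$-cosymplectic. The one genuinely non-formal point, and the place I expect a reader to pause, is the identification $\Psi\xi_1=\xi_2$: this is \emph{not} postulated in Definition~\ref{iso}(a) and must be extracted from the uniqueness of the Reeb vector field, after which the intertwining of the derivations is immediate; everything else is a routine verification.
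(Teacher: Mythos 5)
Your proposal is correct and follows essentially the same route as the paper's own proof: restrict $\Psi$ to $\ker\eta_1$ via the dimension argument, identify $\Psi\xi_1=\xi_2$ through the uniqueness of the Reeb vector field, deduce the intertwining of the derivations, and in the converse direction rebuild $\Psi$ from $\psi$ using the bracket relations \eqref{parentesi}. The treatment of the $\alpha$-cosymplectic case also matches the paper's closing observation, so there is nothing to correct.
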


\begin{proof}
Let $(\g_1,\eta_1,\omega_1)$ and $(\g_2,\eta_2,\omega_2)$ two almost cosymplectic Lie algebras with $d\eta_1=d\eta_2=0$, and $(\h_1,\Omega_1)$ and $(\h_2,\Omega_2)$ with $D_i\in Der(\h_i)$ ($i=1,2$) the corresponding almost symplectic Lie algebras as in Theorem~\ref{teoalmostcosy}.

Suppose first that there exists a Lie algebra isomorphism $\Psi: \g_1 \to \g_2$ satisfying $\Psi^*\omega_2=\omega_1$ and $\Psi^*\eta_2=\eta_1$.
We shall prove that $\Psi(\ker\eta_1)=\ker\eta_2$. In fact, for any $x\in\ker \eta_1$, we have 
$$0=\eta_1(x)=\eta_2 \Psi(x)$$
and so, $\Psi(\ker \eta_1)\subseteq \ker \eta_2$. But $\dim (\Psi(\ker \eta_1))=\dim (\ker \eta_2)=2n$. Hence,  $\Psi(\h_1)=\h_2$, where $\h_1=\ker\eta_1$ and $\h_2=\ker\eta_2$, and restricting $\Psi$ to $\h_1$ we get a Lie algebra isomorphism $\psi:=\Psi|_{\h_1} : \h_1 \to \h_2 $. Moreover, since $\Omega_i:=\omega_i|_{\h_1 \times \h_i}$ for $i=1,2$ we have at once that $\psi^*\Omega_2=\Omega_1$. So, $\psi$ is an isomorphism of almost symplectic Lie algebras. 

Finally, to show that $\psi \circ D_1=D_2 \circ \psi$, we first prove that $\Psi \xi_1=\xi_2$. Recall that, for $i=1,2$, $\xi_i$ is uniquely determined by  conditions  $\eta_i(\xi_i)=0$ and $\omega_i(\xi_i, \cdot)=0$. Now, put $\tilde \xi_2:=\Psi \xi_1$, and for any $y\in \h_2$, let $x\in \g_1$ such that $\Psi(x)=y$.  Then:  
\begin{align*}
	&\eta_2(\tilde \xi_2)=\eta_2(\Psi \xi_1)=\eta_1 (\xi_1)=1, \\[4pt]
	&\omega_2(\tilde\xi_2,y)=\omega_2(\Psi(\xi_1),\Psi(x))=(\Psi^* \omega_2)(\xi_1,x)=\omega_1(\xi_1,x)=0.	
\end{align*}
Consequently, $\tilde \xi_2=\xi _2$ and $\Psi(\xi_1)=\xi_2$. Therefore, for any $x\in \h_1$, we have
\begin{align*}
	\psi D_1(x)=\Psi \ad_{\xi_1}(x)=\Psi[\xi_1,x]=[\Psi \xi_1,\Psi x]=[\xi_2,\psi(x)]= \ad_{\xi_2}(\psi(x))= D_2 \psi(x).
\end{align*}

\medskip
Conversely, let $\psi: (\h_1,\Omega_1) \to (\h_2,\Omega_2)$ denote an isomorphism of almost symplectic Lie algebras, such that $\psi \circ D_1=D_2 \circ \psi$. As in the proof of Theorem~\ref{teoalmostcosy}, consider $\g_i=\mathbb R \xi_i \oplus \h_i$, for $i=1,2$. Define  a linear map $\Psi:  \g_1 \to \g_2$ by $\Psi(\xi_1)=\xi_2$ and $\Psi|{\h_1}=\psi$. Then, $\Psi$ is a Lie algebra isomorphism. In fact, 
\begin{align*}
	& \Psi(\g_1)= \Psi(\mathbb R \xi_1 \oplus \h_1)=\mathbb R \Psi(\xi_1) \oplus \Psi(\h_1)=\mathbb R \xi_2 \oplus \h_2=\g_2,   \\[4pt]
	&  \Psi[x,y]=[\Psi x, \Psi y]  \quad  \ \text{if} \quad  x,y \in \h_1 , \\[4pt]
	&  \Psi[\xi_1,x]=\Psi \ad_{\xi_1}(x)= \psi D_1(x) = D_2 \psi (x)= [\xi_2,\psi (x)]=[\Psi(\xi_1), \Psi(x)]  \quad \text{for all} \; x \in \h_1.	
\end{align*}
 
\noindent
Next, for any $x\in \h_1$,  we have  $\psi(x)\in \h_2 = \ker\eta_2$ and so,
\begin{align*}
	&\Psi^* \eta_2(x)= \eta_2(\Psi (x))= \eta_2(\psi(x))=0=\eta_1(x), \\ &	\Psi^* \eta_2(\xi_1)= \eta_2(\Psi (\xi_1))= \eta_2(\xi_2)=1=\eta_1(\xi_1).
\end{align*}
Therefore,  $\Psi^* \eta_2=\eta_1$. Finally, as we already know by hypothesis that $\psi^* \Omega_2=\Omega_1$, we  easily conclude that $\Psi^* \omega_2=\omega_1$. In fact,
\begin{align*}
	&(\Psi^* \omega_2)|_{\h_1\times \h_1}= \psi^* \Omega_2|_{\h_1\times \h_1}= \Omega_1|_{\h_1\times \h_1}=\omega_1|_{\h_1\times \h_1}, \\[4pt]
	&(\Psi^* \omega_2)(\xi_1,x)= \omega_2(\Psi (\xi_1),\Psi(x))= \omega_2(\xi_2,\psi(x))=0=\omega_1 (\xi_1,x) \quad  \text{for all} \; x \in \h_1.
\end{align*}
So,  $\Psi$ is an isomorphism of almost cosymplectic Lie algebras.

Finally, for the $\alpha$-cosymplectic case it suffices to observe that since $\Psi$ is an isomorphism of almost cosymplectic Lie algebras, $d\omega_1=2\alpha \eta_1 \wedge \omega_1$ if and only if $d\omega_2=2\alpha \eta_2 \wedge \omega_2$. Correspondingly, since $\psi$ is an isomorphism of almost  symplectic Lie algebras, one has  $d\Omega_1=0$ if and only if $d\Omega_2=0$. 
\end{proof}

%

\noindent
In the next section we shall apply the above Theorem~\ref{isoalmostcosy}, together with the classification  of non-isomorphic  four-dimensional symplectic Lie algebras obtained in \cite{Ovando1}, to get a complete classification up to isomorphisms of five-dimensional cosymplectic Lie algebras. Next, we prove the following.


\begin{theorem} \label{isoalmostcokah}
Let $(\g_1,\varphi_1,\xi_1,\eta_1,g_1)$ and $(\g_2,\varphi_2,\xi_2,\eta_2,g_2)$ two  (almost)  $\alpha$-coK\"ahler Lie algebras, and $(\h_1,J_1,h_1,D_1)$ and $(\h_2,J_2,h_2,D_2)$ the corresponding (almost) K\"ahler Lie algebras  and derivations, {as described in Theorems~{\em\ref{teoalmostcokah}} and {\em\ref{teocokah}}. Then, $(\g_1,\varphi_1,\xi_1,\eta_1,g_1)$ and $(\g_2,\varphi_2,\xi_2,\eta_2,g_2)$ are isomorphic} if  and only if there exists an isomorphism \linebreak  $\psi: \h_1 \to \h_2$ of (almost) K\"ahler Lie algebras, such that $\psi \circ D_1= D_2 \circ \psi$.
\end{theorem}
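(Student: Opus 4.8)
The plan is to mirror the proof of Theorem~\ref{isoalmostcosy}, reducing the (almost) $\alpha$-coK\"ahler statement to the (almost) symplectic one already established there, and then to account separately for the extra data carried by the metric $g$ and the structure tensor $\varphi$ on the odd-dimensional side, and by $h$ and $J$ on the even-dimensional side. The key preliminary observation is that, by Definition~\ref{iso}b, every isomorphism $\Psi$ of (almost) $\alpha$-coK\"ahler Lie algebras preserves $\eta$ and, since $\Phi_i=g_i(\cdot,\varphi_i\cdot)$ and $\Psi$ preserves both $g_i$ and $\varphi_i$, it also preserves the fundamental $2$-forms $\Phi_i=\omega_i$. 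Hence $\Psi$ is in particular an isomorphism of the underlying $\alpha$-cosymplectic Lie algebras $(\g_i,\eta_i,\Phi_i)$, which lets me invoke Theorem~\ref{isoalmostcosy} in both directions.

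For the \lq\lq only if\rq\rq\ part, I would start from an (almost) $\alpha$-coK\"ahler isomorphism $\Psi:\g_1\to\g_2$. By the remark above and Theorem~\ref{isoalmostcosy}, the restriction $\psi:=\Psi|_{\h_1}$ is an isomorphism of the associated symplectic Lie algebras satisfying $\psi\circ D_1=D_2\circ\psi$. It then remains to upgrade $\psi$ to an isomorphism of (almost) K\"ahler Lie algebras in the sense of Definition~\ref{iso}d. Since $J_i=\varphi_i|_{\h_i}$ and $h_i=g_i|_{\h_i\times\h_i}$, restricting the identities $\Psi\circ\varphi_1=\varphi_2\circ\Psi$ and $\Psi^*g_2=g_1$ to $\h_1$ yields at once $\psi\circ J_1=J_2\circ\psi$ and $\psi^*h_2=h_1$, as required.

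For the \lq\lq if\rq\rq\ part, I would start from an isomorphism $\psi:\h_1\to\h_2$ of (almost) K\"ahler Lie algebras with $\psi\circ D_1=D_2\circ\psi$. By Definition~\ref{iso}d such a $\psi$ is in particular an isomorphism of the symplectic Lie algebras $(\h_i,\Omega_i)$, so Theorem~\ref{isoalmostcosy} produces a Lie algebra isomorphism $\Psi:\g_1\to\g_2$ with $\Psi\xi_1=\xi_2$, $\Psi|_{\h_1}=\psi$, $\Psi^*\eta_2=\eta_1$ and $\Psi^*\omega_2=\omega_1$. The remaining conditions of Definition~\ref{iso}b are then checked directly: the relation $\Psi\circ\varphi_1=\varphi_2\circ\Psi$ holds on $\xi_1$ because both sides vanish (as $\varphi_i\xi_i=0$ and $\Psi\xi_1=\xi_2$), and on $\h_1$ because $\psi\circ J_1=J_2\circ\psi$; while $\Psi^*g_2=g_1$ follows on $\h_1\times\h_1$ from $\psi^*h_2=h_1$, and on the pairs involving $\xi_1$ from the normalizations $g_i(\xi_i,\xi_i)=1$ and $g_i(\xi_i,x)=\eta_i(x)$ used to build $g_i$ in Theorem~\ref{teoalmostcokah}.

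I expect no genuine obstacle here, as the computations are routine once the reduction to Theorem~\ref{isoalmostcosy} is in place; the only points deserving care are the two noted above, namely that an $\alpha$-coK\"ahler isomorphism automatically preserves $\omega_i=\Phi_i$ (so that Theorem~\ref{isoalmostcosy} genuinely applies), and the verification of $\Psi^*g_2=g_1$ on arguments involving the Reeb field, which relies on the specific extension $g_i(\xi_i,\cdot)=\eta_i$ of the metric. Finally, the integrable (coK\"ahler) case requires no separate treatment: the integrability $N_{J_i}=0$ is part of the hypotheses and is preserved by the $J$-linear isomorphism $\psi$, and the commutation relation $D_iJ_i=J_iD_i$ supplied by Theorem~\ref{teocokah} imposes no extra condition on $\psi$, being automatically respected once $\psi$ intertwines both the $D_i$ and the $J_i$.
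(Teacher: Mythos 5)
Your proposal is correct and follows essentially the same route as the paper: reduce to Theorem~\ref{isoalmostcosy} for the Lie algebra isomorphism and the derivation condition, then restrict $\varphi_i$ and $g_i$ to $\h_i$ for the forward direction, and extend $\psi$ by $\Psi\xi_1=\xi_2$ and check $\Psi\circ\varphi_1=\varphi_2\circ\Psi$ and $\Psi^*g_2=g_1$ case by case (on $\h_1$, and on arguments involving $\xi_1$) for the converse. Your explicit preliminary check that an $\alpha$-coK\"ahler isomorphism preserves $\Phi_i$, so that Theorem~\ref{isoalmostcosy} applies as a black box, is a small point of added care that the paper leaves implicit, but it does not constitute a different argument.
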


\begin{proof}
Let $(\g_1,\varphi_1,\xi_1,\eta_1,g_1)$ and $(\g_2,\varphi_2,\xi_2,\eta_2,g_2)$ denote  two {(almost)} $\alpha$-coK\"ahler Lie algebras, and $(\h_1,J_1,h_1)$ and $(\h_2,J_2,h_2)$ with $D_i \in Der(\h_i)$ ($i=1,2$) the corresponding {(almost)} K\"ahler Lie algebras  as in Theorem~\ref{teoalmostcokah}.

Suppose {first that there exists an isomorphism $\Psi: \g_1 \to \g_2$.} Following the proof of  Theorem~\ref{isoalmostcosy}, there exists a Lie algebra isomorphism  $\psi:=\Psi|_{\h_1}: \h_1  \to \h_2$,  where $\h_1:=\ker\eta_1$ and $\h_2:=\ker\eta_2$. Next, taking into account  $\Psi\circ \varphi_1=\varphi_2 \circ \Psi$ and $J_i=\varphi_i|{\h_i}$, we get
\begin{align*}
	(\psi J_1)(x)=\psi (\varphi_1(x))=\Psi (\varphi_1 (x))=\varphi_2 (\Psi(x)) = \varphi_2 (\psi(x))= (J_2 \psi)(x)   \;\, \text{for all} \; x \in \h_1,
\end{align*}
that is, $\psi \circ J_1 = J_2 \circ \psi$. Finally, since by hypothesis  $\Psi^* g_2=g_1$  and $h_i=g_i|_{\h_i\times \h_i}$ for $i=1,2$,  we find $\psi^* h_2=h_1$. Therefore, $\psi$ is an isomorphism of {(almost)} K\"ahler Lie algebras, and condition  $\psi \circ D_1= D_2 \circ \psi$ is ensured by Theorem~\ref{isoalmostcosy}.

\smallskip
Conversely, let $\psi: \h_1 \to \h_2$ denote an  isomorphism of {(almost)} K\"ahler Lie algebras, such that $\psi \circ D_1=D_2 \circ \psi$. As in Theorem~\ref{teoalmostcokah}, consider $\g_i=\mathbb R \xi_i \oplus \h_i$, for $i=1,2$. Define  a linear map  $\Psi: \g_1 \to \g_2$  by $\Psi(\xi_1)=\xi_2$ and $\Psi|_{\h_1}=\psi$. Then, as in the proof of  Theorem~\ref{isoalmostcosy}, $\Psi$ is a Lie algebras isomorphism such that $\psi^* \eta_2=\eta_1$. Moreover, $\Psi\circ \varphi_1 = \varphi_2 \circ \Psi$. In fact, for any $x \in h_1$,
\begin{align*}
	&\Psi \varphi_1(x)=\Psi (J_1 (x))=(\psi J_1)(x)=(J_2 \psi)(x)= \varphi_2 \Psi(x),\\[4pt]
	&\Psi \varphi_1(\xi_1)=\Psi (\varphi_1 \xi_1) =0= \varphi_2 \xi_2=  \varphi_2 \Psi (\xi_1).
\end{align*}
Finally, as by hypothesis $\psi$ is an isometry, for any $x,y \in \h_1$ we have
\begin{align*}
  &(\Psi^*g_2)(x,y)= g_2(\Psi x, \Psi y) = h_2(\psi x, \psi y)= (\psi^* h_2)(x,y)= h_1(x,y)=g_1(x,y), \\[4pt]
	&(\Psi^*g_2)(\xi_1,x)= g_2(\Psi \xi_1, \Psi x) = g_2(\xi_2, \psi x)=0=g_1(\xi_1,x),\\[4pt]
	&(\Psi^*g_2)(\xi_1,\xi_1)= g_2(\Psi \xi_1, \Psi \xi_1) = g_2(\xi_2,\xi_2)=1= g_1(\xi_1,\xi_1),
\end{align*}
that is, $\Psi$ is also  an isometry and so,  we conclude that $\Psi$ is an  isomorphism of {(almost)}  $\alpha$-coK\"ahler Lie algebras.

\end{proof}


The following Lemmas show that the {property of $D+\alpha I$ being an i.s.t.} and the commutativity of $D$ and $J$ are {both} stable under isomorphisms of symplectic and almost K\"ahler Lie {algebras.}

\begin{lemma} \label{lem1}
Let $\psi: (\h_1, \Omega_1) \to (\h_2, \Omega_2)$ be an isomorphism of symplectic Lie algebras and $D_i \in Der(h_i)$, $i=1,2$. If  
 $\psi \circ D_1= D_2 \circ \psi$,  then  $\psi^* F_{\Omega_2,{D_2+\alpha I}}=F_{\Omega_1,{D_1+\alpha I}}$.

\noindent
In particular, $F_{\Omega_1,{D_1+\alpha I}}$ is symmetric (that is, {$D_1+\alpha I$} is an {i.s.t.}) if and only if so is $F_{\Omega_2,{D_2+\alpha I}}$.
\end{lemma}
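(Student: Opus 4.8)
The plan is to prove this purely formally from the two equivariance hypotheses: that $\psi$ is an isomorphism of symplectic Lie algebras, which by Definition~\ref{iso}(c) means precisely $\psi^*\Omega_2=\Omega_1$, and that $\psi\circ D_1=D_2\circ\psi$. The strategy is simply to unwind the definition of $F_{\Omega,\theta}$ and of the pullback, push the scalar-shifted derivation through $\psi$, and then convert $\Omega_2$ into $\Omega_1$ at the end. No geometric input beyond these two identities is needed.

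First I would note that the intertwining relation upgrades for free to
$$\psi\circ(D_1+\alpha I)=(D_2+\alpha I)\circ\psi,$$
since $\psi$ commutes with the scalar operator $\alpha I$; this is the only place the parameter $\alpha$ enters, and it causes no trouble. Then, for arbitrary $x,y\in\h_1$, the whole statement reduces to the chain
\begin{align*}
(\psi^* F_{\Omega_2,D_2+\alpha I})(x,y)
 &= \Omega_2\big((D_2+\alpha I)\psi x,\ \psi y\big)
  = \Omega_2\big(\psi(D_1+\alpha I)x,\ \psi y\big)\\[4pt]
 &= (\psi^*\Omega_2)\big((D_1+\alpha I)x,\ y\big)
  = \Omega_1\big((D_1+\alpha I)x,\ y\big)
  = F_{\Omega_1,D_1+\alpha I}(x,y),
\end{align*}
where the first equality is the definitions of pullback and of $F_{\Omega,\theta}$, the second is the upgraded intertwining relation, and the fourth uses $\psi^*\Omega_2=\Omega_1$. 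This establishes the displayed identity $\psi^*F_{\Omega_2,D_2+\alpha I}=F_{\Omega_1,D_1+\alpha I}$.

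For the final assertion I would invoke the elementary fact that pulling a bilinear form back along a linear \emph{isomorphism} preserves symmetry in both directions: since $\psi$ is bijective, $F_{\Omega_1,D_1+\alpha I}$ is symmetric if and only if $F_{\Omega_2,D_2+\alpha I}$ is, by the identity just proved. Recalling that symmetry of $F_{\Omega_i,D_i+\alpha I}$ is exactly the definition of $D_i+\alpha I$ being an i.s.t.\ then yields the equivalence. There is no genuine obstacle here — the result is a formal consequence of the two equivariance hypotheses — so the only point requiring care is bookkeeping: keeping the argument of $\psi$ on the correct side of each bilinear form, and remembering that the symmetry equivalence relies on $\psi$ being surjective rather than merely a homomorphism.
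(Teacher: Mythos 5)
Your proposal is correct and follows essentially the same route as the paper's proof: the same chain of equalities unwinding the pullback, pushing $D_2+\alpha I$ through $\psi$ via the intertwining relation, and converting $\Omega_2$ into $\Omega_1$ at the end. Your two explicit remarks — that $\psi\circ D_1=D_2\circ\psi$ upgrades to $\psi\circ(D_1+\alpha I)=(D_2+\alpha I)\circ\psi$, and that bijectivity of $\psi$ is what transfers symmetry in both directions — are points the paper leaves implicit ("the conclusion follows at once"), but they do not constitute a different argument.
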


\begin{proof}
Let $(\h_1, \Omega_1)$ and $(\h_2, \Omega_2)$ be two symplectic Lie algebras, $D_i$ a derivation on $\h_i$, $i=1,2$, and $\psi$ an isomorphism between them. Suppose that $\psi \circ D_1= D_2 \circ \psi$. Then, taking into account $\psi^* \Omega_2 = \Omega_1$, for any $x,y \in \h_1$ we get
\begin{align*}
	(\psi^* F_{\Omega_2,{D_2+\alpha I}}) (x,y)&= F_{\Omega_2,{D_2+\alpha I}} (\psi(x), \psi(y)) = \Omega_2(({D_2+\alpha I}) \psi(x), \psi(y))\\[4pt]
	&=\Omega_2(\psi ({D_1+\alpha I})(x),\psi(y))= (\psi^* \Omega_2) (({D_1+\alpha I})(x),y)\\[4pt]
	&= \Omega_1 (({D_1+\alpha I})(x), y) = F_{\Omega_1,{D_1+\alpha I}} (x,y)
\end{align*}
{and the conclusion follows at once.
}\end{proof}

\begin{lemma}\label{lemma2}
Let $\psi: (\h_1, J_1,h_1) \to (\h_2, J_2, h_2)$ be an isomorphism of almost K\"ahler Lie algebras and $D_i \in Der(h_i)$, $i=1,2$.  If \ $\psi \circ D_1= D_2 \circ \psi$,  then $D_1J_1=J_1D_1$ if and only if $D_2J_2=J_2D_2$.
\end{lemma}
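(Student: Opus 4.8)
The plan is to reduce everything to a single conjugation identity. By Definition~\ref{iso}~d), an isomorphism $\psi$ of almost K\"ahler Lie algebras satisfies $\psi \circ J_1 = J_2 \circ \psi$, and by hypothesis we also have $\psi \circ D_1 = D_2 \circ \psi$. Since $\psi\colon \h_1 \to \h_2$ is a Lie algebra isomorphism, it is in particular a linear isomorphism and hence invertible, so both relations may be rewritten as conjugations by $\psi$, namely $J_2 = \psi \circ J_1 \circ \psi^{-1}$ and $D_2 = \psi \circ D_1 \circ \psi^{-1}$.

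First I would substitute these two expressions into the commutator $D_2 J_2 - J_2 D_2$. The inner factors $\psi^{-1}\circ\psi$ cancel and the computation telescopes, giving
\begin{align*}
D_2 J_2 - J_2 D_2 = \psi \circ (D_1 J_1 - J_1 D_1) \circ \psi^{-1}.
\end{align*}
Because $\psi$ and $\psi^{-1}$ are bijective, the right-hand side is the zero endomorphism if and only if $D_1 J_1 - J_1 D_1 = 0$. Thus $D_2 J_2 = J_2 D_2$ holds exactly when $D_1 J_1 = J_1 D_1$ holds, which is precisely the asserted equivalence; the \emph{if and only if} is automatic from the symmetry of the conjugation identity, so there is no need to treat the two directions separately.

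I do not expect any genuine obstacle: the argument is a one-line conjugation computation. The only points requiring care are that $\psi$ is genuinely invertible (guaranteed since it is a Lie algebra isomorphism) and that one uses the correct intertwining relation $\psi \circ J_1 = J_2 \circ \psi$ supplied by the definition of an isomorphism of almost K\"ahler Lie algebras, rather than a compatibility stated only at the level of the metrics or the fundamental $2$-forms. I would also remark that the metric condition $\psi^* h_2 = h_1$ plays no role in this particular statement: only the $J$-intertwining and the $D$-intertwining relations are needed to close the argument.
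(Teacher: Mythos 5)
Your proposal is correct and is essentially identical to the paper's own argument: both rewrite the intertwining relations $\psi \circ J_1 = J_2 \circ \psi$ and $\psi \circ D_1 = D_2 \circ \psi$ as conjugations by the invertible map $\psi$ and conclude that $D_1J_1 - J_1D_1$ and $D_2J_2 - J_2D_2$ are conjugate, hence simultaneously zero. The only (immaterial) difference is the direction of conjugation, and your remark that the metric condition $\psi^* h_2 = h_1$ is not needed is accurate.
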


\begin{proof}
Let $\psi: (\h_1, J_1,h_1) \to (\h_2, J_2, h_2)$ be an isomorphism of almost K\"ahler Lie algebras. Then, 
$$\psi \circ J_1 = J_2 \circ \psi$$
and so, if $\psi \circ D_1= D_2 \circ \psi$, to complete the proof  it suffices to observe that
\begin{align*}
	D_1J_1 = \psi^{-1} D_2 \psi \psi^{-1} J_2 \psi = \psi^{-1} D_2 J_2 \psi  {\textrm {\; and\; }}  J_1D_1 = \psi^{-1} J_2 \psi \psi^{-1} D_2 \psi =\psi^{-1} J_2 D_2 \psi.
\end{align*}
\end{proof}

In particular,  Theorem~\ref{isoalmostcokah} and Lemma~\ref{lemma2} yield at once the following result.

\begin{theorem} \label{isoKcosy}
Let $(\g_1,\varphi_1,\xi_1,\eta_1,g_1)$ and $(\g_2,\varphi_2,\xi_2,\eta_2,g_2)$ be two almost $\alpha$-coK\"ahler Lie algebras with $\mathcal L_{\xi_i} \varphi_i =0$ (in particular, two $K$-cosymplectic Lie algebras), and $(\h_1,J_1,h_1,D_1)$ and $(\h_2,J_2,h_2,D_2)$ the corresponding almost K\"ahler Lie algebras and derivations  as in Theorem~{\em\ref{teoKcosy}}. Then,  $\g_1$ and $\g_2$ are isomorphic if  and only if there exists an isomorphism  $\psi: \h_1 \to \h_2$ of almost K\"ahler Lie algebras, such that $\psi \circ D_1= D_2 \circ \psi$.
\end{theorem}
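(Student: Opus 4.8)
The plan is to deduce the statement directly from the general isomorphism criterion for (almost) $\alpha$-coK\"ahler Lie algebras, namely Theorem~\ref{isoalmostcokah}, by observing that the subclass singled out by $\mathcal L_{\xi_i}\varphi_i=0$ (in particular the $K$-cosymplectic one) is cut out among all almost $\alpha$-coK\"ahler Lie algebras by an extra condition which, on the even-dimensional side, corresponds via Theorem~\ref{teoKcosy} to the commutativity $D_iJ_i=J_iD_i$. First I would make explicit that the data $(\h_i,J_i,h_i,D_i)$ furnished by Theorem~\ref{teoKcosy} coincide with those furnished by Theorem~\ref{teoalmostcokah}, augmented only by the relation $D_iJ_i=J_iD_i$; this is immediate, since Theorem~\ref{teoKcosy} is built on the very same construction $\h_i=\ker\eta_i$, $J_i=\varphi_i|_{\h_i}$, $h_i=g_i|_{\h_i\times\h_i}$, $D_i=\ad_{\xi_i}|_{\h_i}$.

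With this identification in place, an almost $\alpha$-coK\"ahler Lie algebra with $\mathcal L_{\xi_i}\varphi_i=0$ is in particular an almost $\alpha$-coK\"ahler Lie algebra, and the notion of isomorphism (Definition~\ref{iso}) is unchanged upon passing to the subclass. Hence Theorem~\ref{isoalmostcokah} applies verbatim, and I would invoke it to conclude that $\g_1$ and $\g_2$ are isomorphic if and only if there exists an isomorphism $\psi:\h_1\to\h_2$ of almost K\"ahler Lie algebras satisfying $\psi\circ D_1=D_2\circ\psi$. Since the two directions of the desired equivalence are exactly the two directions of Theorem~\ref{isoalmostcokah}, nothing further needs to be verified for the isomorphism itself.

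The one point deserving care---and where Lemma~\ref{lemma2} enters---is to confirm that, inside the smaller class, the criterion of Theorem~\ref{isoalmostcokah} requires no additional compatibility of $\psi$ with the defining condition $DJ=JD$. Here I would appeal to Lemma~\ref{lemma2}: given the almost K\"ahler isomorphism $\psi$ with $\psi\circ D_1=D_2\circ\psi$, one has $D_1J_1=J_1D_1$ if and only if $D_2J_2=J_2D_2$. In the present situation both commutativity relations hold automatically, as both $\g_1$ and $\g_2$ lie in the subclass and therefore correspond, through Theorem~\ref{teoKcosy}, to commuting derivations; thus the extra condition is stable under $\psi$ and imposes no further constraint. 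I do not expect a genuine obstacle: the result is essentially a corollary obtained by restricting Theorem~\ref{isoalmostcokah} to the subclass $\mathcal L_\xi\varphi=0$ and checking, via Lemma~\ref{lemma2}, that the characteristic condition of that subclass is preserved by the isomorphisms produced. The only delicate bookkeeping is to match the two notions of isomorphism used in Theorem~\ref{isoalmostcokah} and in the sought statement, which is immediate from Definition~\ref{iso}.
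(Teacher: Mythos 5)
Your proposal is correct and follows essentially the same route as the paper: the paper derives Theorem~\ref{isoKcosy} as an immediate consequence of Theorem~\ref{isoalmostcokah} together with Lemma~\ref{lemma2}, which are exactly the two ingredients you invoke (the general isomorphism criterion applied to the subclass cut out by $\mathcal L_\xi\varphi=0$, plus the stability of $DJ=JD$ under intertwining almost K\"ahler isomorphisms). Your write-up merely makes explicit the bookkeeping that the paper leaves implicit in the phrase \lq\lq yield at once\rq\rq.
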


\section{Classifications  in dimension five}
\setcounter{equation}{0}

We shall now classify up to isomorphisms five-dimensional cosymplectic (respectively,  strictly almost coK\"ahler, strictly $K$-cosymplectic, coK\"ahler) Lie algebras, {and provide some explicit examples of (almost) $\alpha$-Kenmotsu Lie algebras,} describing the form of the suitable derivations of the corresponding non-isomorphic  four-dimensional symplectic (respectively, strictly almost K\"ahler, K\"ahler) Lie algebras, as illustrated  in Section~\ref{scLa}.
We note that  by a  strictly almost K\"ahler (respectively,  strictly almost coK\"ahler) Lie algebra, we mean a Lie algebra which only admits non-integrable almost K\"ahler (respectively, non-normal almost coK\"ahler) structures.

\bigskip

Four-dimensional {non-abelian} symplectic Lie algebras have been classified up to isomorphisms by Ovando in \cite[Proposition~2.4]{Ovando1}. For each of these four-dimensional symplectic Lie algebras $(\h,\Omega)$, we can determine their derivations $D\in Der(\h)$ such that {$D+\alpha I$ is an infinitesimal symplectic transformation.} 

Let now  $(\g,\eta,\omega)$ be any five-dimensional cosymplectic Lie algebra. Then, by Theorem~\ref{teocosy}, it determines a corresponding four-dimensional symplectic Lie algebra, which must be isomorphic to one of the examples classified in \cite[Proposition~2.4]{Ovando1}. Consequently, by Theorem~\ref{isoalmostcosy}, $(\g,\eta,\omega)$ is  isomorphic to the cosymplectic structure arising from such example. Therefore, we have the following.

\begin{theorem} 
Let $(\g,\eta,\omega)$ be a five-dimensional cosymplectic Lie algebra. Then, $(g,\eta,\omega)$ is isomorphic to one of cosymplectic Lie algebras arising, as in Theorem~{\em\ref{teocosy}}, from one of the triples $(\h,\Omega,D)$, {where either $\h$ is abelian or  $(\h,\Omega)$ is one of non-abelian four-dimensional symplectic Lie algebras  listed in the following Table~$1$,} and $D$ a suitable derivation of $\h$. 
\end{theorem}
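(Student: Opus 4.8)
The plan is to reduce the classification entirely to the correspondence and isomorphism results of Section~\ref{scLa}, combined with the known list of four-dimensional symplectic Lie algebras, by exploiting the fact that a cosymplectic structure is precisely the $\alpha=0$ case of an $\alpha$-cosymplectic structure. First I would note that a five-dimensional cosymplectic Lie algebra $(\g,\eta,\omega)$ satisfies $d\eta=d\omega=0$, hence is $\alpha$-cosymplectic with $\alpha=0$. Applying Theorem~\ref{teocosy} with $\alpha=0$, I would extract from $(\g,\eta,\omega)$ the four-dimensional symplectic Lie algebra $(\h,\Omega)$, where $\h=\ker\eta$ and $\Omega=\omega|_{\h\times\h}$, together with the derivation $D=\ad_\xi|_\h$, which in this case is itself an infinitesimal symplectic transformation (since $D+0\cdot I=D$).

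Next I would invoke the classification of four-dimensional symplectic Lie algebras. Every such algebra is either abelian, in which case $\h$ is $\mathbb{R}^4$ equipped with its standard (up to isomorphism, unique) symplectic form, or non-abelian, in which case $(\h,\Omega)$ is isomorphic as a symplectic Lie algebra to exactly one of the models in Ovando's list \cite[Proposition~2.4]{Ovando1}, displayed in Table~$1$. Let $\psi\colon\h\to\h'$ denote such an isomorphism of symplectic Lie algebras onto the appropriate model $(\h',\Omega')$. I would then transport the derivation by setting $D'=\psi\circ D\circ\psi^{-1}$, so that $D'\in Der(\h')$ and, by construction, $\psi\circ D=D'\circ\psi$. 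By Lemma~\ref{lem1} taken with $\alpha=0$, the property of being an infinitesimal symplectic transformation is preserved under this transport, so $D'$ is again one; hence $(\h',\Omega',D')$ is an admissible triple which, via Theorem~\ref{teocosy}, gives rise to a five-dimensional cosymplectic Lie algebra of the required form.

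Finally, to identify $(\g,\eta,\omega)$ with the cosymplectic Lie algebra built from $(\h',\Omega',D')$, I would apply Theorem~\ref{isoalmostcosy}: since $\psi$ is an isomorphism of (almost) symplectic Lie algebras satisfying the intertwining relation $\psi\circ D=D'\circ\psi$, that theorem yields an isomorphism of the two corresponding cosymplectic Lie algebras. This closes the argument, as in each case the model triple $(\h',\Omega',D')$ has $\h'$ either abelian or on Ovando's list. I do not expect the logical skeleton to present any real difficulty, being a direct concatenation of Theorems~\ref{teocosy} and~\ref{isoalmostcosy} with the four-dimensional classification. The genuinely laborious part, which supplies the explicit content behind the phrase \lq\lq$D$ a suitable derivation\rq\rq, is the case-by-case determination, for each symplectic model $(\h',\Omega')$ in Table~$1$, of all derivations $D'$ for which $D'$ is an infinitesimal symplectic transformation; in each case this amounts to solving the linear system expressing simultaneously the derivation property of $D'$ and the symmetry of $F_{\Omega',D'}$, exactly as carried out in the seven-dimensional example of the previous section.
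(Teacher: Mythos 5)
Your proposal is correct and follows essentially the same route as the paper: apply Theorem~\ref{teocosy} with $\alpha=0$ to extract $(\h=\ker\eta,\Omega,D=\ad_\xi|_\h)$, identify $(\h,\Omega)$ with the abelian case or one of Ovando's models via \cite[Proposition~2.4]{Ovando1}, and conclude by Theorem~\ref{isoalmostcosy}. The only difference is that you make explicit the transport of the derivation $D'=\psi\circ D\circ\psi^{-1}$ and its preservation of the i.s.t.\ property via Lemma~\ref{lem1}, a step the paper leaves implicit; this is a welcome clarification rather than a deviation.
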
 

\medskip
{\small
\begin{center}
\begin{tabular}{|c|c|l|}
\hline $\vphantom{\displaystyle\frac{A^{A^{A^{A}}}}{A}}$ $\h$ $\vphantom{\displaystyle\frac{A^{A^{A^{A}}}}{A}}$ &  Symplectic $2$-form $\Omega$&  { \qquad \qquad \qquad \qquad \quad  $D=\ad _{\xi}|_{\h}$ i.s.t.} \\
\hline  $ {\mathfrak {rh}} _3$  & $ e^{14}+e^{23}$  &   
$ \begin{array}{l}
\vphantom{\displaystyle\frac{A^{A^{A}}}{A}}
D e_1= 2p e_1 + q e_3 +r e_4, \quad
D e_2= s e_1 -p e_2 + t e_3 +q e_4, \\[4pt]
D e_3= p e_3, \quad
D e_4= -s e_3 - 2p e_4 \\[4pt]
\end{array}$
\\
\hline  $\mathfrak {rr} _{3,0}$     & $ e^{12}+e^{34} $  &  
$\begin{array}{l}
\vphantom{\displaystyle\frac{A^{A^{A}}}{A}}
D e_1= p e_2, \quad 
D e_3= q e_3 + r e_4,  \quad 
D e_4= s e_3 - q e_4 \\[4pt]
\end{array}$
 \\
\hline  $\vphantom{\displaystyle\frac{A^{A^{A}}}{A}}$ $\mathfrak {rr} _{3,-1}$     & $ e^{14}+e^{23} $  & 
$\begin{array}{l}
\vphantom{\displaystyle\frac{A^{A^{A}}}{A}}
D e_1= p e_4, \quad
D e_2= q e_2, \quad
D e_3= -q e_3 \\[4pt]
\end{array}$ \\
\hline  $\vphantom{\displaystyle\frac{A^{A^{A}}}{A}}$ $\mathfrak {rr'} _{3,0}$     & $ e^{14}+ e^{23} $ &   
$\begin{array}{l}
\vphantom{\displaystyle\frac{A^{A^{A}}}{A}}
D e_1= p e_4, \quad
D e_2= q e_3, \quad
D e_3= -q e_2 \\[4pt]
\end{array}$ \\
\hline  $\vphantom{\displaystyle\frac{A^{A^{A}}}{A}}$ $\mathfrak {r}_{2}\mathfrak r_2$     & {$e^{12}+\lambda e^{13}+ e^{34}, \  \lambda \geq 0$}  & 
$\begin{array}{l}
\vphantom{\displaystyle\frac{A^{A^{A}}}{A}}
D e_1= p e_2, \quad 
D e_3= q e_4 \\[4pt]
\end{array}$ \\
\hline  $\vphantom{\displaystyle\frac{A^{A^{A}}}{A}}$ $\mathfrak {r'}_{2}$     & $ e^{14}+ e^{23}$  & 
$\begin{array}{l}
\vphantom{\displaystyle\frac{A^{A^{A}}}{A}}
D e_1= p e_3 +q e_4, \quad
D e_2= -q e_3 + p e_4 \\[4pt]
\end{array}$ \\
\hline  $\vphantom{\displaystyle\frac{A^{A^{A}}}{A}}$ ${\mathfrak n}_4$  & $ e^{12}+e^{34}$  &  
$\begin{array}{l}
\vphantom{\displaystyle\frac{A^{A^{A}}}{A}}
D e_1= p e_2 +q e_3, \quad
D e_2= p e_3,\quad 
D e_4= p e_1 -q e_2 +r e_3 \\[4pt]
\end{array}$ \\
\hline   $\vphantom{\displaystyle\frac{A^{A^{A}}}{A}}$${\mathfrak r}_{4,0}$  & {$e^{14}+ \varepsilon e^{23}, \ \varepsilon = \pm 1$}  & 
$\begin{array}{l}
\vphantom{\displaystyle\frac{A^{A^{A}}}{A}}
D e_3= p e_2, \quad
D e_4= q e_1 \\[4pt]
\end{array}$ \\
\hline
\end{tabular} 
\\ \nopagebreak{$\vphantom{displaystyle{A^{A^A}}}$ {Table 1. $4$D symplectic Lie algebras giving rise to $5$D cosymplectic Lie algebras (I)} $\vphantom{\displaystyle\frac{a}{2}}$}
\end{center}
}

{\small
\begin{center}
\begin{tabular}{|c|c|l|}
\hline $\vphantom{\displaystyle\frac{A^{A^{A^{A}}}}{A}}$ $\h$ $\vphantom{\displaystyle\frac{A^{A^{A^{A}}}}{A}}$ &  Symplectic $2$-form $\Omega$&  { \qquad \qquad \qquad $D=\ad _{\xi}|_{\h}$ i.s.t.} \\
\hline  $\vphantom{\displaystyle\frac{A^{A^{A}}}{A}}$ ${\mathfrak r}_{4,-1}$  & $ e^{13}+e^{24} $  &  
$\begin{array}{l}
\vphantom{\displaystyle\frac{A^{A^{A}}}{A}}
D e_3= p e_2, \quad
D e_4= p e_1 +q e_2 \\[4pt]
\end{array}$ \\
\hline  $\vphantom{\displaystyle\frac{A}{A}}$ $\begin{array}{c}{\mathfrak r}_{4,-1,\beta} \\[6pt]   {-1\leq \beta<0} \end{array}$  & $ e^{12}+e^{34} $  &  
$\begin{array}{ll}
\begin{array}{l}
\vphantom{\displaystyle\frac{A^{A^{A}}}{A}}
D e_1= p e_1 \\[4pt]
D e_2= -p e_2 +\varepsilon q e_3 \\[4pt]
D e_4= \varepsilon q e_1 + r e_3 \\[4pt]
\end{array},\ 
\begin{array} {l}
\varepsilon= 0 {\textrm {\ if\ }} q \neq -1 \\[4pt]
\varepsilon= 1 {\textrm {\ if\ }} q =-1
\end{array}
\end{array}
$ \\
\hline   $\vphantom{\displaystyle\frac{A^{A^{A}}}{A}}$$\begin{array}{c}{\mathfrak r}_{4,\bar \alpha,-\bar \alpha} \\[4pt]  -1<\bar \alpha<0 \end{array}$  & $ e^{14}+e^{23} $  &
$\begin{array}{l}
\vphantom{\displaystyle\frac{A^{A^{A}}}{A}}
D e_2= p e_2, \quad
D e_3= -p e_3, \quad
D e_4=  q e_1  \\[4pt]
\end{array}$\\
\hline  $\vphantom{\displaystyle\frac{A^{A^{A^A}}}{A}}$ {$\mathfrak {r'} _{4,0,\delta}, \  \delta>0 $  }   & 
{$ e^{14}+ \varepsilon e^{23}, \ \varepsilon =\pm 1 $} &  
$\begin{array}{l}
\vphantom{\displaystyle\frac{A^{A^{A}}}{A}}
D e_2= p e_3, \quad
D e_3= -p e_2, \quad
D e_4=  q e_1 \\[4pt]
\end{array}$\\
\hline  $\vphantom{\displaystyle\frac{A^{A^{A}}}{A}}$ $\mathfrak {d} _{4,1}$   &  $\begin{array}{c}  e^{12}-e^{34} + \varepsilon e^{24}  \\[4pt]  \varepsilon =0,1  \end{array}$ &  
$\begin{array}{l}
\vphantom{\displaystyle\frac{A^{A^{A}}}{A}}
D e_1= (1-\varepsilon)p e_1, \quad
D e_2= (\varepsilon -1 )p e_2 + q e_3 \\[4pt]
D e_4= -q e_1 + r e_3 \\[4pt]
\end{array}$\\
\hline  $\vphantom{\displaystyle\frac{A^{A^{A}}}{A}}$ $\mathfrak {d} _{4,2}$   &  $\vphantom{\displaystyle\frac{A^{A^{A^{A^A}}}}{A}}$ 
{$\begin{array}{c} \\ e^{12}-e^{34} \\[10pt]  e^{14}+ \varepsilon e^{23},\ \varepsilon=\pm 1 \end{array}$ } &  
$ \begin{array} {l}
\begin{array}{l} \\
D e_1=  p e_1, \quad
D e_2= -p e_2, \quad
D e_4= q e_3 \\[4pt]
\end{array} 
\\
\begin{array}{l}
\vphantom{\displaystyle\frac{A^{A^{A}}}{A}}
D e_2=  p e_3, \quad
D e_4= -2p e_1 \\[4pt]
\end{array} \\[4pt]
\end{array}$ \\
\hline  
$ \begin{array}{c}\mathfrak {d} _{4,\lambda}\\[4pt]   \lambda\geq 1/2, \neq 1,2 \end{array}$   &   $ e^{12}-e^{34} $ & 
$ \begin{array} {ll} \begin{array}{l}
\vphantom{\displaystyle\frac{A^{A^{A}}}{A}}
D e_1=  p e_1 + \varepsilon q e_2  \\[4pt]
D e_2=  \varepsilon r e_1-p e_2  \\[4pt]
D e_4= s e_3 \\[4pt]
\end{array}, \ 
\begin{array} {c}
\varepsilon= 0 {\textrm {\ if\ }} \lambda \neq \frac{1}{2} \\[4pt]
\varepsilon= 1 {\textrm {\ if\ }} \lambda= \frac{1}{2}
\end{array}
\end{array}
$ \\
\hline  $\vphantom{\displaystyle\frac{A^{A^{A^A}}}{A}}$ 
{$\mathfrak {d'} _{4,\delta}, \  \delta>0 $  }  & {$\varepsilon( e^{12}-\delta e^{34}), \ 
\varepsilon = \pm 1 $}    &  
$\begin{array}{l}
\vphantom{\displaystyle\frac{A^{A^{A}}}{A}}
D e_1=  p e_2, \quad
D e_2=  -p e_1, \quad
D e_4= q e_3 \\[4pt]
\end{array}$ \\
\hline  $\vphantom{\displaystyle\frac{A^{A^{A}}}{A}}$ $ \mathfrak {h} _4 $    &  {$\varepsilon(e^{12}- e^{34}),  \
 \varepsilon = \pm 1 $ }    & 
$D e_2=  p e_1, \quad
D e_4= q e_3$  \\
\hline
\end{tabular}
\\ \nopagebreak { $\vphantom{displaystyle{A^{A^A}}}$ {Table 1. $4$D symplectic Lie algebras giving rise to $5$D cosymplectic Lie algebras (II)} $\vphantom{\displaystyle\frac{a}{2}}$}
\end{center}
}


%
We  now turn our attention to five-dimensional strictly almost coK\"ahler  Lie algebras. By Theorem~\ref{teoalmostcokah}, they arise from four-dimensional strictly almost K\"ahler Lie algebras, together with  a derivation $D$ such that {$D$ is an infinitesimal symplectic transformation}. Let  $\h$ denote  a four-dimensional strictly almost K\"ahler Lie algebra, that is, one admitting almost K\"ahler but not K\"ahler structures. By \cite{CF}, $\h$ is isomorphic to one of the following:
\begin{align} \label{akCF}
\begin{array}{l}
\mathfrak{rh}_3,\quad  	\mathfrak{r}'_2,\quad	\mathfrak{rr}_{3,-1},\quad	\mathfrak{n}_4,\quad	\mathfrak{r}_{4,0},\quad	\mathfrak{r}_{4,-1},\quad\mathfrak{r}_{4,-1,  \beta} \ (-1\leq  \beta<0), \\[4pt]
\mathfrak{r}_{4,\bar \alpha,-\bar \alpha} \ (-1<\bar \alpha<0), \quad \mathfrak{d}_{4,1},\quad	\mathfrak{d}_{4,\lambda} \ (\lambda > \frac{1}{2},\ \lambda \neq 1,2),\quad	\mathfrak{h}_4.
\end{array}
\end{align}

\noindent
For each of strictly almost K\"ahler Lie algebras listed  in \eqref{akCF}, the corresponding symplectic $2$-form $\Omega$ and {i.s.t. derivations $D$} are the ones listed in the previous Table~1.  By Theorem~\ref{isoalmostcokah}, each five-dimensional strictly almost coK\"ahler Lie algebra is then isomorphic to an almost coK\"ahler Lie algebra arising from one of such strictly almost K\"ahler examples.  Moreover, we observe that each of the above Lie algebras admits a family of  compatible almost complex structures $J$, which depends on at least four parameters. Thus, we get the following. 

\begin{theorem}
Let $(\g,\varphi,\xi,\eta,g)$ be a five-dimensional strictly almost coK\"ahler  Lie algebra. Then, $(\g,\varphi,\xi,\eta,g)$ is isomorphic to one of almost  coK\"ahler Lie algebras arising, as in Theorem~{\em\ref{teocokah}}, from one of the {quadruples $(\h,\Omega,J,D)$,} where  
\begin{itemize}
\item[(a)]  $\h$ is  one of Lie algebras listed in \eqref{akCF};
\vspace{4pt}\item[(b)]  its symplectic $2$-form $\Omega$ and {derivation} $D$ are as listed in Table~1;
\vspace{4pt}\item[(c)] {$J$ is a compatible almost complex structure.}
\end{itemize}
\end{theorem}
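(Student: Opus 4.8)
The plan is to transfer the problem to dimension four through the correspondence of Theorem~\ref{teoalmostcokah} specialized to $\alpha=0$, and then to quote the known classification of four-dimensional strictly almost K\"ahler Lie algebras. I would begin with an arbitrary five-dimensional strictly almost coK\"ahler Lie algebra $(\g,\varphi,\xi,\eta,g)$. Since it is in particular an almost $\alpha$-coK\"ahler Lie algebra with $\alpha=0$, Theorem~\ref{teoalmostcokah} attaches to it the four-dimensional almost K\"ahler Lie algebra $(\h=\ker\eta,\,J=\varphi|_\h,\,h=g|_{\h\times\h})$, with symplectic form $\Omega=\Phi|_{\h\times\h}$, together with the derivation $D=\ad_\xi|_\h$; because $\alpha=0$, this $D$ is itself an infinitesimal symplectic transformation.

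The decisive step is to strengthen the conclusion that $\h$ is almost K\"ahler to the statement that $\h$ is \emph{strictly} almost K\"ahler, that is, to show that $\h$ admits no K\"ahler structure. Here I would argue by contraposition and rely on Theorem~\ref{teocokah}, which matches five-dimensional coK\"ahler Lie algebras with K\"ahler Lie algebras carrying an i.s.t. derivation that commutes with $J$, together with the equivalence (valid once $\mathcal L_\xi\varphi=0$, which holds automatically for any coK\"ahler structure since then $\xi$ is Killing) between the normality of $(\g,\varphi,\xi,\eta,g)$ and the integrability $N_J=0$. The point to be secured is that, if $\h$ carried a K\"ahler structure, then $\g$ would admit a coK\"ahler structure, contradicting the hypothesis that $\g$ is strictly almost coK\"ahler. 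I expect this to be the main obstacle, since it is a statement about the specific subalgebra $\ker\eta$, and requires that the non-normal character of $\g$ pass faithfully, through the bijective correspondence underlying Theorem~\ref{isoalmostcokah}, to the non-integrable character of $\h$; concretely, one must check that the four-dimensional Lie algebras admitting a K\"ahler structure, paired with their admissible i.s.t. derivations, invariably produce genuine coK\"ahler---and hence not strictly almost coK\"ahler---five-dimensional Lie algebras.

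Once $\h$ is known to be strictly almost K\"ahler, the classification of \cite{CF} identifies it, up to isomorphism, with one of the Lie algebras listed in \eqref{akCF}, and its symplectic form $\Omega$ together with the i.s.t. derivations $D$ are exactly those already tabulated in Table~1 for the case $\alpha=0$. Finally, Theorem~\ref{isoalmostcokah} reduces the isomorphism classification of the five-dimensional structures to that of the quadruples $(\h,\Omega,J,D)$ under an isomorphism $\psi$ with $\psi\circ D_1=D_2\circ\psi$, so that every five-dimensional strictly almost coK\"ahler Lie algebra is isomorphic to one arising from such a quadruple; it then remains only to observe that each $\h$ in \eqref{akCF} carries a family of compatible almost complex structures $J$, depending on at least four parameters, which supplies ingredient (c) and completes the description.
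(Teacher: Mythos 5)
Your proposal follows the same skeleton as the paper's own argument: specialize Theorem~\ref{teoalmostcokah} to $\alpha=0$ to obtain $(\h=\ker\eta,\ J=\varphi|_{\h},\ h=g|_{\h\times\h})$ together with the i.s.t.\ derivation $D=\ad_\xi|_{\h}$; quote the classification \eqref{akCF} of four-dimensional strictly almost K\"ahler Lie algebras from \cite{CF} and the data of Table~1; and invoke Theorem~\ref{isoalmostcokah} to reduce the isomorphism question to the quadruples $(\h,\Omega,J,D)$. You have also located the real crux, which the paper passes over in a single unproved sentence: one must show that $\h=\ker\eta$ is \emph{strictly} almost K\"ahler, i.e.\ that it admits no K\"ahler structure at all.

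However, the contrapositive you propose to establish this (that if $\h$ carried a K\"ahler structure then $\g$ would admit a coK\"ahler structure) is false, so the gap cannot be closed along these lines. By Theorem~\ref{teocokah}, producing a coK\"ahler structure on the \emph{fixed} Lie algebra $\g$ requires more than a K\"ahler structure on the kernel of a closed $1$-form: the derivation induced by the Reeb vector, which is dictated by the bracket of $\g$, must in addition be an i.s.t.\ for the K\"ahler form \emph{and commute with} its complex structure. Non-normality of every structure on $\g$ can therefore stem solely from the failure of $DJ=JD$, even when every almost complex structure on $\h$ is integrable. Concretely, take $\h=\mathbb{R}^4$ abelian, $\Omega=e^{13}+e^{24}$, $D=\mathrm{diag}(1,2,-1,-2)$ (an i.s.t.\ for $\Omega$), and $\g=\mathbb{R}\xi\ltimes_{D}\mathbb{R}^{4}$. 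Since $D$ is invertible, $[\g,\g]=\mathbb{R}^{4}$, so every closed $1$-form on $\g$ annihilates $\mathbb{R}^{4}$; hence every almost coK\"ahler structure on $\g$ has kernel $\mathbb{R}^{4}$ and Reeb derivation a nonzero multiple of $D$. An operator with four distinct real eigenvalues commutes with no almost complex structure, so by Theorem~\ref{teocokah} this $\g$ admits no coK\"ahler structure whatsoever: it is strictly almost coK\"ahler. Yet its kernel $\mathbb{R}^{4}$ is K\"ahler (being abelian, every $J$ on it is integrable) and does not occur in \eqref{akCF}, and since isomorphisms of almost coK\"ahler Lie algebras carry kernel to kernel (Theorem~\ref{isoalmostcokah}), no structure on this $\g$ is isomorphic to one arising from the list. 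The step you flagged is therefore not merely missing but false in the stated generality; repairing it would require an extra hypothesis (such as the restriction to non-abelian $\ker\eta$ that the paper imposes in its five-dimensional coK\"ahler classification) together with an actual argument for the transfer of strictness, which neither your proposal nor the paper supplies.
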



Comparing Theorems~\ref{teoalmostcokah} and \ref{teoKcosy}, one can find examples of {almost $\alpha$-coK\"ahler Lie algebras with $\mathcal L_\xi \varphi \neq 0$,}  considering  strictly almost K\"ahler Lie algebras $(\h,J, h)$ (equivalently, $(\h,\Omega,J)$) {together with a derivation $D$ such that $D+\alpha I$ is an i.s.t. but}  $DJ\neq JD$. We report below some explicit examples.



\begin{example}{\em {\bf  Almost coK\"ahler not $K$-cosymplectic Lie algebras from $\h_4$.}
We consider the following four-dimensional symplectic non-K\"ahler Lie algebra $(\h,\Omega)$ \cite{CF} and choose on it a compatible almost complex structure $J$:
$$\begin{array}{ll}
\h:=\h_4 : &  [e_1,e_2] = e_3, \quad [e_4,e_3] = e_3, \quad [e_4,e_1] = \frac{1}{2}e_1, \quad [e_4,e_2] = e_1 +\frac{1}{2}e_2, \\[6pt] 
& \Omega= \varepsilon (e^{12}-e^{34}), \quad \varepsilon = \pm 1, \qquad J e_1=- e_2, \quad J e_3=e_4. 
\end{array}
$$
Standard calculations yield that  i.s.t. $D \in Der(\h)$ are completely described by
$$D e_1= D e_3=0, \quad D e_2= p e_1,\quad D e_4=q e_3 . $$
\noindent
Moreover, for  $(p,q)\neq(0,0)$ we get nontrivial derivations which do not satisfy $DJ=JD$.  Consequently, applying Theorems~\ref{teoalmostcokah} and \ref{teoKcosy}, we conclude that $(\h,\Omega,J)$ gives rise to a family of five-dimensional strictly almost coK\"ahler (not $K$-cosymplectic) Lie algebras $(\g,\varphi, \xi, \eta,g)$, depending on two parameters  $(p,q)\neq(0,0)$,  explicitly described by 
$$\begin{array}{ll} \g : &[e_0,e_2]=p e_1,  \quad  [e_0,e_4]=q e_3, \\[6pt]
& [e_1,e_2] = e_3, \quad [e_4,e_3] = e_3, \quad [e_4,e_1] = \frac{1}{2}e_1, \quad [e_4,e_2] = e_1 +\frac{1}{2}e_2, \\[6pt]
& \xi=e_0, \quad \varphi(\xi)=0,\ \varphi|_{\h}=J,\quad \eta=e^0, \quad \Phi= \varepsilon(e^{12}-e^{34}),\ \varepsilon=\pm1.
\end{array}
$$
}\end{example}

\begin{example}{\em {\bf  Almost $\alpha$-coK\"ahler Lie algebras with $\mathcal L_\xi \varphi \neq 0$ from $\mathfrak r'_2$.}
We start with the following four-dimensional symplectic non-K\"ahler Lie algebra $(\h =\mathfrak r'_2,\Omega)$  (see also \cite{CF}) and choose on it a compatible almost complex structure $J$:
$$\begin{array}{ll}
\mathfrak r'_2 : & [e_1,e_3] = e_3, \quad [e_1,e_4] = e_4, \quad [e_2,e_3] = e_4, \quad [e_2,e_4] =- e_3,\\[6pt]
& \Omega=  e^{14}+e^{23},\qquad J e_3=e_2, \quad J e_4=e_1. 
\end{array}$$
By straightforward calculations we find that derivations $D \in Der(\h)$ such that $D+\alpha I$ is an i.s.t., are completely described by
\begin{align} \label{derr2}
	D e_1=p e_3+q e_4,\quad D e_2=-q e_3 +p e_4, \quad D e_3= -2\alpha e_3,\quad  D e_4=-2\alpha e_4.
\end{align}
\noindent
We note that if $\alpha \neq 0$ then for any $p,q \in \mathbb R$ the above are nontrivial derivations, for which  $DJ \neq JD$. Moreover,  if $\alpha=0$ then the above are nontrivial derivations for which $DJ \neq JD$ for any $(p,q)\neq(0,0)$. Therefore, applying Theorems~\ref{teoalmostcokah} and \ref{teoKcosy}, we conclude that $(\h,\Omega,J)$ gives rise to a family of five-dimensional strictly almost $\alpha$-coK\"ahler  Lie algebras $(\g,\varphi, \xi, \eta,g)$ with $\mathcal L_\xi \varphi \neq 0$, depending on two parameters   $p,q$ ($\neq(0,0)$ if $\alpha=0$),  explicitly described by 
$$\begin{array}{ll} 
\g :&[e_0,e_1]=p e_3 +q e_4, \quad [e_0,e_2]=-qe_3 +p e_4, \quad [e_0,e_3]=-2 \alpha e_3, \quad  [e_0,e_4]=-2\alpha e_4 \\[6pt]
 &[e_1,e_2] = e_3, \quad [e_4,e_3] = e_3, \quad [e_4,e_1] = \frac{1}{2}e_1, \quad [e_4,e_2] = e_1 +\frac{1}{2}e_2, \\[6pt]
 &\xi=e_0, \quad \varphi(\xi)=0,\ \varphi|_{\h}=J,\quad \eta=e^0, \quad \Phi= e^{14}+e^{23}.
\end{array}$$
}\end{example}


%

\medskip
Applying the same argument, we can use Theorem~\ref{isoKcosy} to classify five-dimension\-al strictly $K$-cosymplectic  Lie algebras, starting again  from the non-isomorphic examples of four-dimensional strictly almost K\"ahler Lie algebras $(\h,J,h)$ (equivalently, $(\h, \Omega,J)$) as classified in \cite{CF}, and considering their  i.s.t. $D\in Der(\h)$ such that $D$ commutes with $J$ (Theorem~\ref{teoKcosy}). 
In this way, we prove the following. 

\begin{theorem}\label{TeoKco}
Let $(\g,\varphi,\xi,\eta,g)$ be a five-dimensional strictly $K$-cosymplectic Lie  algebra. Then, $(\g,\varphi,\xi,\eta,g)$ is isomorphic to one of Lie algebras arising, as in Theorem~{\em\ref{teoKcosy}}, from  one of the almost K\"ahler Lie algebras listed in \eqref{akCF}, together with their symplectic $2$-form $\Omega$ as described in Table~$1$, any almost complex structures $J$ compatible with $\Omega$, and the trivial derivation $D=0$. 
\end{theorem}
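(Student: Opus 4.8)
The plan is to read off the classification from Theorem~\ref{teoKcosy}, the isomorphism criterion of Theorem~\ref{isoKcosy}, and the classification of four-dimensional strictly almost K\"ahler Lie algebras in \cite{CF}, the only genuine work being to prove that the associated derivation vanishes. First I would transfer the problem to dimension four. Let $(\g,\varphi,\xi,\eta,g)$ be a five-dimensional strictly $K$-cosymplectic Lie algebra, so that it is $K$-cosymplectic but not coK\"ahler and, in particular, $\mathcal L_\xi\varphi=0$. By Theorem~\ref{teoKcosy} it determines a four-dimensional almost K\"ahler Lie algebra $(\h=\ker\eta,\,J=\varphi|_\h,\,h=g|_{\h\times\h})$ together with the derivation $D=\ad_\xi|_\h$, which is an i.s.t.\ and satisfies $DJ=JD$. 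Since the structure is not coK\"ahler we have $N_\varphi\neq0$, and by the equivalence $N_\varphi=0\Leftrightarrow N_J=0$ recorded just before Theorem~\ref{teocokah} the pair $(J,h)$ is non-integrable; hence $\h$ carries a non-integrable almost K\"ahler structure, so by \cite{CF} it is isomorphic to one of the Lie algebras in \eqref{akCF}, with symplectic form $\Omega=\Phi|_{\h\times\h}$ as listed in Table~1.

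The key step is to show $D=0$. By Remark~\ref{p13}, taken with $\alpha=0$, the conjunction of ``$D$ is an i.s.t.'' and ``$DJ=JD$'' is equivalent to $D$ being skew-adjoint with respect to $h$ while commuting with $J$; equivalently, $D$ is a derivation of $\h$ lying in the unitary algebra $\mathfrak u(\h,J,h)$. I would then proceed through the list \eqref{akCF} one algebra at a time: for each entry the i.s.t.\ derivations are already displayed in Table~1 as a family depending on a few real parameters, and imposing $DJ=JD$ for a compatible $J$ (equivalently, imposing skew-adjointness) forces every parameter to vanish, so that $D=0$. This is exactly the behaviour exhibited in the two explicit Examples above, where the nontrivial i.s.t.\ derivations of $\h_4$ and $\mathfrak{r}'_2$ are seen not to commute with the chosen compatible $J$. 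This verification is the main obstacle: it is elementary linear algebra but has to be carried out for each Lie algebra in the list, as there is no single uniform identity annihilating $D$ simultaneously for all of them.

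Finally I would assemble the statement. Knowing $D=0$, the criterion of Theorem~\ref{isoKcosy} reduces to the mere existence of an isomorphism $\psi\colon\h_1\to\h_2$ of almost K\"ahler Lie algebras, the intertwining condition $\psi\circ D_1=D_2\circ\psi$ being automatically satisfied; thus the isomorphism classes of five-dimensional strictly $K$-cosymplectic Lie algebras are in bijection with those of the four-dimensional strictly almost K\"ahler Lie algebras of \cite{CF}. Conversely, starting from any $\h$ in \eqref{akCF} with its $\Omega$ from Table~1, any compatible almost complex structure $J$, and $D=0$ (which is trivially an i.s.t.\ commuting with every $J$), Theorem~\ref{teoKcosy} produces a five-dimensional $K$-cosymplectic Lie algebra whose underlying almost K\"ahler factor is non-integrable, hence one that is strictly $K$-cosymplectic. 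This exhibits precisely the family described in the statement and completes the argument.
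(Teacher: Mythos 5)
Your proposal follows exactly the paper's route: reduce via Theorem~\ref{teoKcosy} to a four-dimensional almost K\"ahler algebra $(\h,J,h)$ with a derivation $D$ that is an i.s.t.\ commuting with $J$, identify $\h$ with an entry of \eqref{akCF} via \cite{CF}, argue that the commuting condition forces $D=0$, and invoke Theorem~\ref{isoKcosy} for the isomorphism classification. Like the paper, you leave the case-by-case linear algebra implicit, so on that score the two arguments are at the same level of detail.

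There is, however, one genuine logical flaw, and it occurs precisely at the step where you try to justify something the paper merely asserts. You argue: the structure is non-normal, hence $N_J\neq 0$, hence $\h$ carries a non-integrable almost K\"ahler structure, \lq\lq so by \cite{CF} it is isomorphic to one of the Lie algebras in \eqref{akCF}\rq\rq. This is a non-sequitur: the list \eqref{akCF} consists, by definition, of the algebras admitting almost K\"ahler but \emph{no} K\"ahler structures, and the existence of one non-integrable compatible $J$ on $\h$ does not a priori exclude the existence of a K\"ahler structure on the same $\h$. What the strictness of $\g$ actually gives is that $\g$ admits no coK\"ahler structure at all; to conclude $\h\in$ \eqref{akCF} one must show that if $\h$ admitted a K\"ahler structure $(J^*,h^*)$ then $\g$ would be coK\"ahler -- and this is not automatic either, since the derivation $D=\ad_\xi|_{\h}$ is tied to the given structure and need not be an i.s.t.\ for, nor commute with, $(J^*,h^*)$ (compare Theorem~\ref{teocokah}, which requires both). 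So either one proves that the algebras of \eqref{akCF} are exactly the ones carrying non-integrable compatible structures, or one disposes of the K\"ahler-admitting $\h$'s by a separate argument; your chain of implications does neither. A second, smaller point: in the key step you \lq\lq impose $DJ=JD$ for a compatible $J$\rq\rq\ and cite the two Examples, but those fix a single $J$, whereas the theorem quantifies over \emph{all} compatible $J$ (a family depending on at least four parameters, as the paper notes); the verification that all parameters in the Table~1 derivations vanish must be carried out with $J$ an arbitrary member of that family, and by Remark~\ref{p13} this is most efficiently phrased as $D$ lying in $Der(\h)\cap\mathfrak{sp}(\Omega)$ and skew-adjoint for the varying metric $h=\Omega(\cdot,J\cdot)$.
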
 

Finally, by Theorem~\ref{teocokah}, we know that five-dimensional coK\"ahler Lie algebras can be constructed starting from four-dimensional K\"ahler Lie algebras $(\h,J,h)$ admitting an i.s.t. $D\in Der(\h)$ satisfying $DJ=JD$. The classification of non-isomorphic {(non-abelian)} four-dimensional pseudo-K\"ahler Lie algebras was given  in \cite[Proposition 3.3]{Ovando2}. Among them, the ones admitting a definite positive inner product (and so, K\"ahler structures) are classified in \cite[Table 2]{CF}.  
Using this result and Theorem~\ref{isoalmostcokah}, we prove the following. 

\begin{theorem} 
Let $(\g,\varphi,\xi,\eta,g)$ be a five-dimensional coK\"ahler Lie algebra. {If $\ker \eta$ is not abelian, then} $(\g,\varphi,\xi,\eta,g)$ is isomorphic to one of coK\"ahler Lie algebras arising, as in Theorem~{\em\ref{teocokah}}, from one of the quadruples $(\h,\Omega,J,D)$ listed in the following Table~$2$, where $(\h, \Omega,J)$ is a four-dimensional K\"ahler  Lie algebra and $D$ a suitable derivation of $\h$.
\end{theorem}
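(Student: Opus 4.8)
The plan is to run the same reduction scheme already used for the five-dimensional cosymplectic and strictly $K$-cosymplectic classifications, but now feeding it through the coK\"ahler correspondence of Theorem~\ref{teocokah} taken at $\alpha=0$. Start from an arbitrary five-dimensional coK\"ahler Lie algebra $(\g,\varphi,\xi,\eta,g)$ with $\h:=\ker\eta$ non-abelian. By Theorem~\ref{teocokah} it determines a four-dimensional K\"ahler Lie algebra $(\h,J,h)$, equivalently $(\h,\Omega,J)$ with $\Omega=\Phi|_{\h\times\h}$, together with a derivation $D\in Der(\h)$ for which $D$ is an i.s.t.\ and $DJ=JD$. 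Since $\h$ is non-abelian, it must be isomorphic, as a K\"ahler Lie algebra, to one of the non-abelian four-dimensional K\"ahler Lie algebras $\h_0$ classified in \cite[Table 2]{CF} (these being exactly the positive-definite members of Ovando's list in \cite[Proposition 3.3]{Ovando2}); let $\psi_0\colon\h\to\h_0$ be such a K\"ahler isomorphism. This is where the non-abelian hypothesis on $\ker\eta$ enters: it is precisely what lets us appeal to that finite list.

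Next I would transport the derivation along $\psi_0$, setting $D_0:=\psi_0\,D\,\psi_0^{-1}\in Der(\h_0)$, so that $\psi_0\circ D=D_0\circ\psi_0$. By Lemma~\ref{lem1} the i.s.t.\ property survives this transport, and by Lemma~\ref{lemma2} so does the commutation relation, whence $D_0$ is an admissible derivation of $\h_0$, i.e.\ one that is both an i.s.t.\ and commutes with $J_0$. The core computation is then to determine, for each representative $(\h_0,\Omega_0,J_0)$ in the list, the full space of such admissible $D_0$: each of the two conditions, $\Omega_0(D_0x,y)=\Omega_0(D_0y,x)$ for all $x,y$ and $D_0J_0=J_0D_0$, is a linear constraint on the matrix entries of $D_0$, so solving the combined linear system is routine (if lengthy) and produces the parametrized families recorded in Table~2. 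Finally, since $\psi_0$ is a K\"ahler isomorphism intertwining $D$ and $D_0$, Theorem~\ref{isoalmostcokah} yields that $(\g,\varphi,\xi,\eta,g)$ is isomorphic to the coK\"ahler Lie algebra reconstructed from the quadruple $(\h_0,\Omega_0,J_0,D_0)$, establishing the claimed exhaustiveness.

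The genuine obstacle is not the derivation computation itself but the book-keeping needed to make Table~2 an honest classification rather than a mere list of constructions. By Theorem~\ref{isoalmostcokah}, two quadruples over the same $(\h_0,\Omega_0,J_0)$ give isomorphic coK\"ahler Lie algebras exactly when their derivations are conjugate by a K\"ahler automorphism $\psi$ of $(\h_0,\Omega_0,J_0)$ satisfying $\psi D=D'\psi$; Lemmas~\ref{lem1} and~\ref{lemma2} guarantee this automorphism action preserves the admissibility conditions, so the quotient is well defined. To present the table I would therefore have to factor each admissible family of derivations by the K\"ahler automorphism group of $\h_0$ and normalize the surviving parameters algebra-by-algebra, which is where the real effort lies.
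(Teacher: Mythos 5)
Your proposal is correct and follows essentially the same route as the paper: reduce via Theorem~\ref{teocokah} (at $\alpha=0$) to a four-dimensional K\"ahler Lie algebra with a derivation that is an i.s.t.\ and commutes with $J$, invoke the classification of non-abelian four-dimensional K\"ahler Lie algebras from \cite[Table 2]{CF}, transport the derivation along the K\"ahler isomorphism using Lemmas~\ref{lem1} and~\ref{lemma2}, and conclude with Theorem~\ref{isoalmostcokah} after computing the admissible derivations case by case. Note only that the normalization by the K\"ahler automorphism group described in your final paragraph is not needed for the statement as given, which asserts exhaustiveness of Table~2 rather than pairwise non-isomorphism of its entries.
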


{\footnotesize
\begin{center}
\begin{tabular}{|c|c|c|c|}
\hline $\h$  &  Symplectic $2$-forms $\Omega$ & $J=\varphi|_\h$   &$\begin{array}{l} D=\ad _{\xi}|_{\h} {\rm \ i.s.t.\ } \vphantom{A^{A^{A^A}}}\\[2pt] \text{with } DJ=JD\end{array}$\\
\hline $\vphantom{\displaystyle\frac{A^{A}}{A}}$  $\mathfrak {rr} _{3,0}$     & $\begin{array}{l}  a_{12}e^{12}+a_{34}e^{34},\  a_{12},a_{34}< 0 \end{array}$  &  $J e_1=e_2,\ Je_3=e_4$ & 
$\begin{array}{l}
De_3=-p e_4, \ De_4=p e_3 
\end{array}
$ \\
\hline  $\vphantom{\displaystyle\frac{A^{A}}{A}}$ $\mathfrak {rr'} _{3,0}$     & $\begin{array}{l}  a_{14}e^{14}+a_{23}e^{23},\  a_{14},a_{23} <0 \end{array}$ &  $J e_1=e_4,\ Je_2=e_3$  & 
$\begin{array}{l} 
De_2=-p e_3, \ De_3=p e_2 
\end{array}
$ \\
\hline  $\vphantom{\displaystyle\frac{A^{A}}{A}}$ $\mathfrak {r}_{2}\mathfrak r_2$     & $\begin{array}{l}  a_{12}e^{12}+a_{34}e^{34},\  a_{12}, a_{34} < 0 \end{array}$  &  $J e_1=e_2,\ Je_3=e_4$ & 
\ $D=0$ \\
\hline  $\vphantom{\displaystyle\frac{A^{A^{A^{A^A}}}}{A}}$ $\begin{array}{c}\mathfrak {r'} _{4,0,\delta}\\[4pt] \delta>0 \end{array}$     & $\begin{array}{l}  a_{14}e^{14}+a_{23}e^{23}, \  a_{14}>0,a_{23}< 0 \\[4pt] a_{14}e^{14}+a_{23}e^{23},\  a_{14},a_{23}> 0 \end{array}$ &  $\begin{array}{c} Je_4=e_1,\ Je_2= e_3 \\[4pt] Je_4=e_1,\ Je_3= e_2\end{array}$ &  
$\begin{array}{l} 
De_2=-p e_3, \ De_3=p e_2
\end{array}
$ \\
\hline $\vphantom{\displaystyle\frac{A^{A}}{A}}$   $\mathfrak {d} _{4,2}$   &   $a_{14}e^{14}+a_{23}e^{23},\  a_{14}, a_{23}< 0$  &   $Je_1=e_4,\ Je_2= e_3 $   & $ D=0$ \\
\hline $\vphantom{\displaystyle\frac{A^{A}}{A}}$ $\mathfrak {d} _{4,\frac{1}{2}}$ $\vphantom{\displaystyle\frac{A^{A}}{A}}$  &   $a_{12}(e^{12}-e^{34}),\  a_{12}<0$ &  $Je_1=e_2,\ Je_4=e_3$   & 
$\begin{array}{l} 
De_1=-p e_2, \ De_2=p e_1 
\end{array}
$ \\
\hline  $\vphantom{\displaystyle\frac{A^{A^{A^{A^A}}}}{A}}$ $\begin{array}{c} \mathfrak {d'} _{4,\delta} \\[4pt] \delta>0 \end{array} $  $\vphantom{\displaystyle\frac{A^{A^{A^{A^A}}}}{A}}$   &   $\begin{array}{c} a_{12}(e^{12}-\delta e^{34}),\  a_{12}<0\\[4pt] a_{12}(e^{12}-\delta e^{34}),\  a_{12}>0 \end{array}$   &  $\begin{array}{c} Je_1= e_2,\ Je_4=e_3 \\[4pt]  Je_2=e_1,\ Je_3=e_4\end{array}$  & 
$\begin{array}{l}
De_1=-p e_2, \ De_2=p e_1 
\end{array}
$ \\
\hline
\end{tabular}
\\ \nopagebreak { $\vphantom{displaystyle{A^{A^A}}}$ {Table 2. $4$D K\"ahler Lie algebras giving rise to $5$D coK\"ahler Lie algebras $\vphantom{\displaystyle\frac{a}{2}}$}}
\end{center}
}

\begin{example}{\em {\bf  $\alpha$-Kenmotsu Lie algebras.}
Consider the abelian  K\"ahler Lie algebra $(\h,\Omega,J)$, where $\h=span(e_1,e_2,e_3,e_4)$, $\Omega=  e^{31}+e^{42}$ and  $J e_1=e_3, \ J e_2= e_4$ is a compatible  complex structure.
%

By direct  calculations we find that derivations $D \in Der(\h)$ such that $D+\alpha I$ is an i.s.t. and $DJ=JD$, are of the form

$$
D=
\left(\begin{array}{rrrr}
 -\alpha &  p & q& r \\[4pt]
 -p &-\alpha & r & s \\[4pt]
 -q & -r &  -\alpha &  p\\[4pt]
 -r & -s &-p &-\alpha 
\end{array}
\right).
$$%
\noindent
Consequently, applying Theorem \ref{teocokah}, we conclude that from $(\h,\Omega,J)$ {one can construct} a family of five-dimensional $\alpha$-Kenmotsu  Lie algebras $(\g,\varphi, \xi, \eta,g)$, depending, for each given $\alpha \neq 0$, on four parameters: 
$$\begin{array}{ll} 
\g :&[e_0,e_1]=-\alpha e_1 -p e_2 -q e_3 -r e_4, \quad\,  [e_0,e_2]=p e_1 -\alpha e_2 -r e_3 -s e_4,  \\[6pt]
 &[e_0,e_3]= q e_1 + re_2 -\alpha e_3 -p e_4, \qquad  [e_0,e_4]=r e_1 +se_2+p e_3 -\alpha e_4,  \\[6pt]
 &\xi=e_0, \quad \varphi(\xi)=0,\ \varphi|_{\h}=J,\quad \eta=e^0, \quad \Phi= e^{31}+e^{42}.
\end{array}$$
}\end{example}

\section{Concluding remarks}
\setcounter{equation}{0}

A {\em Sasakian structure} on an odd-dimensional manifold $M$ is a normal contact metric structure $(\varphi,\xi,\eta,g)$, that is, a normal almost contact metric structure such that $d\eta=\Phi$. 

As such, clearly Sasakian structures are as far as possible from coK\"ahler structures, for which $d\eta=0$. Nevertheless, CoK\"ahler and Sasakian structures are both considered as odd-dimen\-sional counterparts to K\"ahler structures. Which one of them is the most natural analogue to K\"ahler structures, is actually the topic of debate among researchers. 

The results of this paper give a contribution to this debate, showing that with regard to left-invariant structures on Lie groups, the link between K\"ahler and coK\"ahler structures seems to be stronger and straightforward than the one between K\"ahler and Sasakian structures. In fact, any coK\"ahler Lie algebra arises from a K\"ahler structure together with a suitable derivation (Theorem~\ref{teocokah}). On the other hand, Sasakian structures can be built by contactization of a K\"ahler Lie algebra \cite{AFV}, and in particular, all Sasakian Lie algebras with nontrivial center can be built in this way, but there also exist Sasakian Lie algebras with trivial center, which do not arise from a corresponding K\"ahler Lie algebra.

The same fact happens for the wider corresponding classes of $K$-cosymplectic and {\em $K$-contact} Lie algebras (that is, contact metric Lie algebras whose characteristic vector field is Killing). In fact, we can build any $K$-cosymplectic Lie algebra from a corresponding almost K\"ahler Lie algebra with some suitable derivation (Theorem~\ref{teoKcosy}), while there exist $K$-contact Lie algebras (the ones with trivial center) which do not arise from any almost K\"ahler Lie algebra \cite{CF}.

\end{document}